\newcommand{\fcap}{{f_{\scalebox{0.5}{$\mathrm{C}$}}}}
\newcommand{\find}{{f_{\scalebox{0.5}{$\mathrm{L}$}}}}
\newcommand{\fres}{{f_{\scalebox{0.5}{$\mathrm{R}$}}}}
\newcommand{\dfcap}{{m_{\scalebox{0.5}{$\mathrm{C}$}}}}
\newcommand{\dfind}{{m_{\scalebox{0.5}{$\mathrm{L}$}}}}
\newcommand{\dfres}{{m_{\scalebox{0.5}{$\mathrm{R}$}}}}
\newcommand{\xfcap}{{\chi_{\scalebox{0.5}{$\mathrm{C}$}}}}
\newcommand{\xfind}{{\chi_{\scalebox{0.5}{$\mathrm{L}$}}}}
\newcommand{\xfres}{{\chi_{\scalebox{0.5}{$\mathrm{R}$}}}}
\newcommand{\gcap}{{g_{\scalebox{0.5}{$\mathrm{C}$}}}}
\newcommand{\gind}{{g_{\scalebox{0.5}{$\mathrm{L}$}}}}
\newcommand{\gres}{{g_{\scalebox{0.5}{$\mathrm{R}$}}}}
\newcommand{\strongfcap}{{F_{\scalebox{0.5}{$\mathrm{C}$}}}}
\newcommand{\strongfind}{{F_{\scalebox{0.5}{$\mathrm{L}$}}}}
\newcommand{\isrc}{{i_{\scalebox{0.5}{$\mathrm{S}$}}}}
\newcommand{\vsrc}{{v_{\scalebox{0.5}{$\mathrm{S}$}}}}
\newcommand{\xcap}{{x_{\scalebox{0.5}{$\mathrm{C}$}}}}
\newcommand{\xind}{{x_{\scalebox{0.5}{$\mathrm{L}$}}}}
\newcommand{\xres}{{x_{\scalebox{0.5}{$\mathrm{R}$}}}}
\newcommand{\icap}{{i_{\scalebox{0.5}{$\mathrm{C}$}}}}
\newcommand{\iind}{{i_{\scalebox{0.5}{$\mathrm{L}$}}}}
\newcommand{\ires}{{i_{\scalebox{0.5}{$\mathrm{R}$}}}}
\newcommand{\ivsrc}{{i_{\scalebox{0.5}{$\mathrm{V}$}}}}
\newcommand{\vcap}{{v_{\scalebox{0.5}{$\mathrm{C}$}}}}
\newcommand{\vind}{{v_{\scalebox{0.5}{$\mathrm{L}$}}}}
\newcommand{\vres}{{v_{\scalebox{0.5}{$\mathrm{R}$}}}}
\newcommand{\ipcap}{{i'_{\scalebox{0.5}{$\mathrm{C}$}}}}
\newcommand{\vpind}{{v'_{\scalebox{0.5}{$\mathrm{L}$}}}}
\newcommand{\vpres}{{v'_{\scalebox{0.5}{$\mathrm{R}$}}}}
\newcommand{\Acap}{{A_{\scalebox{0.5}{$\mathrm{C}$}}}}
\newcommand{\Aind}{{A_{\scalebox{0.5}{$\mathrm{L}$}}}}
\newcommand{\Ares}{{A_{\scalebox{0.5}{$\mathrm{R}$}}}}
\newcommand{\Acsrc}{{A_{\scalebox{0.5}{$\mathrm{I}$}}}}
\newcommand{\Avsrc}{{A_{\scalebox{0.5}{$\mathrm{V}$}}}}
\newcommand{\Axtype}{{A_{\scalebox{0.5}{$\mathrm{X}$}}}}
\newcommand{\Aytype}{{A_{\scalebox{0.5}{$\mathrm{Y}$}}}}
\newcommand{\Atcap}{{A^\top_{\scalebox{0.5}{$\mathrm{C}$}}\!}}
\newcommand{\Atind}{{A^\top_{\scalebox{0.5}{$\mathrm{L}$}}\!}}
\newcommand{\Atres}{{A^\top_{\scalebox{0.5}{$\mathrm{R}$}}\!}}
\newcommand{\Atvsrc}{{A^\top_{\scalebox{0.5}{$\mathrm{V}$}}\!}}
\newcommand{\Pcapvsrc}{{P_{\scalebox{0.5}{$\mathrm{CV}$}}}}
\newcommand{\Presmcapvsrc}{{P_{\scalebox{0.5}{$\mathrm{R-CV}$}}}}
\newcommand{\Qcapvsrc}{{Q_{\scalebox{0.5}{$\mathrm{CV}$}}}}
\newcommand{\Qresmcapvsrc}{{Q_{\scalebox{0.5}{$\mathrm{R-CV}$}}}}
\newcommand{\Ptresmcapvsrc}{{P^\top_{\scalebox{0.5}{$\mathrm{R-CV}$}}}}
\newcommand{\Qtcapvsrc}{{Q^\top_{\scalebox{0.5}{$\mathrm{CV}$}}}}
\newcommand{\Qtresmcapvsrc}{{Q^\top_{\scalebox{0.5}{$\mathrm{R-CV}$}}}}
\newcommand{\PLIcut}{{P_{\scalebox{0.5}{$\mathrm{LI-cut}$}}}}
\newcommand{\QLIcut}{{Q_{\scalebox{0.5}{$\mathrm{LI-cut}$}}}}
\newcommand{\PtLIcut}{{P^\top_{\scalebox{0.5}{$\mathrm{LI-cut}$}}}}
\newcommand{\PCVloop}{{P_{\scalebox{0.5}{$\mathrm{CV-loop}$}}}}
\newcommand{\QCVloop}{{Q_{\scalebox{0.5}{$\mathrm{CV-loop}$}}}}
\newcommand{\QtCVloop}{{Q^\top_{\scalebox{0.5}{$\mathrm{CV-loop}$}}}}
\newcommand{\dxdy}[2]{\frac{\mathrm{d}#1}{\mathrm{d}#2}}
\newcommand{\im}{\mathrm{im}}
\newcommand{\rank}{\mathrm{rank}}
\newcommand{\setR}{\mathbb{R}}
\newcommand{\Efield}{\vec{E}}
\newcommand{\Dfield}{\vec{D}}
\newcommand{\Hfield}{\vec{H}}
\newcommand{\Bfield}{\vec{B}}
\newcommand{\Jfield}{\vec{J}}
\newcommand{\Afield}{\vec{A}}
\newcommand{\rhofield}{\rho}
\renewcommand{\varphi}{\phi}
\newcommand{\divfit}{{S}}
\newcommand{\divdfit}{\widetilde{{S}}}
\newcommand{\curlfit}{{C}}
\newcommand{\curldfit}{\widetilde{{C}}}
\newcommand{\gradfit}{{G}}
\newcommand{\Meps}{{M}_{\varepsilon}}
\newcommand{\Msigma}{{M}_{\sigma}}
\newcommand{\Mmu}{{M}_{\mu}}
\newcommand{\Mnu}{{M}_{\nu}}
\spnewtheorem{assumption}{Assumption}{\bf}{\it}
\journalname{Progress in Differential-Algebraic Equations}
\begin{document}

\title{Generalized Circuit Elements
  \thanks{
    Supported by Deutsche Forschungsgemeinschaft
    (DFG, German Research Foundation) under Germany's Excellence Strategy:
    the Berlin Mathematics Research Center MATH+ (EXC-2046/1, project ID: 390685689) and
    the Graduate School of Computational Engineering at TU Darmstadt (GSC 233) as well as
    the DFG grant SCHO1562/1-2 and the Bundesministerium f\"ur Wirtschaft und Energie (BMWi, Federal Ministry for Economic Affairs and Energy) grant 0324019E.
  }
}

\titlerunning{Generalized Circuit Elements}

\author{Idoia Cortes Garcia \and
  Sebastian Sch\"ops \and
  Christian Strohm \and
  Caren Tischendorf
}

\authorrunning{Cortes Garcia, Sch\"ops, Strohm, Tischendorf} 

\institute{Idoia Cortes Garcia \at
  TU Darmstadt, Computational Electromagnetics Group,
	Schlossgartenstr. 8, 64289  Darmstadt\\
  \email{idoia.cortes@tu-darmstadt.de}
  \and
  Sebastian Sch\"ops \at
  TU Darmstadt, Computational Electromagnetics Group,
	Schlossgartenstr. 8, 64289  Darmstadt\\
  \email{sebastian.schoeps@tu-darmstadt.de}
  \and
  Christian Strohm \at
  Humboldt Universit\"at zu Berlin, Unter den Linden 6,
  10099 Berlin\\
  \email{strohmch@math.hu-berlin.de}
  \and
  Caren Tischendorf \at
  Humboldt Universit\"at zu Berlin, Unter den Linden 6,
  10099 Berlin\\
  \email{tischendorf@math.hu-berlin.de}
}

\date{Received: date / Accepted: date}

\maketitle

\begin{abstract}
The structural analysis, i.e., the investigation of the differential-algebraic nature, of circuits containing simple elements, i.e., resistances, inductances and capacitances is well established. However, nowadays circuits contain all sorts of elements, e.g. behavioral models or partial differential equations stemming from refined device modelling. This paper proposes the definition of generalized circuit elements which may for example contain additional internal degrees of freedom, such that those elements still behave structurally like resistances, inductances and capacitances. Several complex examples demonstrate the relevance of those definitions.
\end{abstract}

\section{Introduction}
\label{intro}
Circuits or electric networks are a common modeling technique to describe the electrotechnical behavior of large systems. Their structural analysis, i.e., the investigation of the properties of the underlying differential-algebraic equations (DAEs), has a long tradition. For example Bill Gear studied in 1971 `the mixed differential and algebraic equations of the type that commonly occur in the transient analysis of large networks' in \cite{Gear_1971aa}. At that time several competing formulations were used in the circuit simulation community, for example the sparse tableau analysis (STA) was popular. This changed with the introduction of the modified nodal analysis (MNA) by Ho et. al in \cite{Ho_1975aa} and the subsequent development of the code SPICE \cite{Nagel_1975aa}. Nowadays all major circuit simulation tools are using some dialect of MNA, e.g. the traditional formulation or the flux/charge oriented one \cite{Feldmann_1999aa}. The mathematical structure has been very well understood in the case of simple elements, i.e., resistances, inductances and capacitances as well as sources \cite{Gunther_1995aa,Estevez-Schwarz_2000aa}.

However, the complexity of element models has increased quickly. For example, the semiconductor community develops various phenomenological and physical models, which are standardized e.g. in the BSIM (Berkeley Short-channel IGFET Model) family, \cite{Sheu_1987aa}. The development of \textit{mixed-mode device simulation} has become popular, which is mathematically speaking the coupling of DAEs with partial differential equations (PDEs), e.g. \cite{Rollins_1988aa,Mayaram_1992aa,Grasser_2000aa,Gunther_2000ab}. Even earlier, low frequency engineers have established \textit{field-circuit-coupling}, i.e., the interconnection of finite element machine models with circuits, first based on loop analysis, later (modified) nodal analysis e.g. \cite{Potter_1983aa,Costa_2000aa}. 

Until now, the structural DAE analysis of circuits which are based on complex (`refined') elements has mainly been carried out on a case by case basis, e.g. for elliptic semiconductor models in \cite{Ali_2003aa}, parabolic-elliptic models of electrical machines in \cite{Tsukerman_2002aa,Bartel_2011aa,Cortes-Garcia_2019aa} and hyperbolic models stemming from the full set of Maxwell's equations in \cite{Baumanns_2013aa}. Based on the analysis made in \cite{Cortes-Garcia_2019aa}, this contribution aims for a more systematic analysis: we consider each element as an arbitrary (smooth) function of voltages, currents, internal variables and their derivatives. Then, we formulate sets of assumptions (`generalized elements') on these functions, e.g. which quantity is derived or which DAE-index does the function have. Based on these assumptions we proof a DAE index result that generalizes \cite{Estevez-Schwarz_2000aa}. Not surprisingly, it turns out that our generalized elements are natural generalizations of the classical elements, i.e., resistances, inductances and capacitances. All results are formulated in the context of electrical engineering but the presented approach is also of interest for the analysis and simulation of other networks such as gas transport networks \cite{JanTis_2014aa,GruJHCTB_2014aa,BGHHST_2018aa} or power networks \cite{MMOS_2018aa}.

The paper is structured as follows: we start with a few basic mathematical definitions and results in Section~\ref{sec:analytic_preliminaries}, then we give the definitions of our generalized elements and some simple examples in Section~\ref{sec:generalized_circuit_elements}. In Section~\ref{sec:circuit_structures} we discuss the mathematical modeling of circuits by modified nodal analysis. Finally, \ref{sec:DAE_index} proves the new DAE index results which are then applied to several very complex refined models in Section~\ref{sec:refined}.

\section{Mathematical Prelimenaries}
\label{sec:analytic_preliminaries}

Let us collect some basic notations and definitions:
\begin{definition}
  A function $f:\mathbb{R}^m\to \mathbb{R}^m$ is called
  strongly monotone if and only if there is a constant $c>0$ such that
  \begin{align*}
    \forall\,x,\bar x\in\mathbb{R}^m:
    \quad \langle f(x)-f(\bar x),x-\bar x\rangle \ge c \|x-\bar x\|^2.
  \end{align*}    
\end{definition}

\begin{lemma}
  \label{lem:posdef.monotone}
  Let $M\in\mathbb{R}^{m\times m}$ be a matrix. Then,
  the linear function $f(x):=Mx$ is strongly monotone if and only if $M$ is positive definite.
\end{lemma}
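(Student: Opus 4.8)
The plan is to exploit linearity to reduce the equivalence to a statement about a single quadratic form, and then to treat the two implications separately. Writing $f(x)=Mx$, linearity gives $f(x)-f(\bar x)=M(x-\bar x)$, so upon substituting $y:=x-\bar x$ (which ranges over all of $\mathbb{R}^m$ as $x,\bar x$ do) the strong monotonicity condition becomes: there exists $c>0$ such that $\langle My,y\rangle\ge c\|y\|^2$ for every $y\in\mathbb{R}^m$. The whole lemma thus amounts to showing that such a uniform lower bound exists precisely when $M$ is positive definite, i.e.\ $\langle My,y\rangle>0$ for all $y\neq 0$. (I would note in passing that $\langle My,y\rangle=y^\top\tfrac{1}{2}(M+M^\top)y$, so only the symmetric part of $M$ matters, but this observation is not needed for the argument.)

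The forward implication is immediate: if strong monotonicity holds with constant $c>0$, then for any $y\neq 0$ we get $\langle My,y\rangle\ge c\|y\|^2>0$, which is exactly positive definiteness.

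The reverse implication is where the real work lies, and I expect the main obstacle to be manufacturing a \emph{uniform} constant $c$ out of the merely pointwise positivity supplied by positive definiteness. My plan is to use a compactness argument. Consider the map $y\mapsto\langle My,y\rangle$, which is continuous, and restrict it to the unit sphere $S:=\{y\in\mathbb{R}^m:\|y\|=1\}$. Since $S$ is compact and the map is continuous, the Weierstrass extreme value theorem guarantees that it attains a minimum $c:=\min_{y\in S}\langle My,y\rangle$. Every $y\in S$ is nonzero, so positive definiteness forces $\langle My,y\rangle>0$ on $S$, whence the attained minimum satisfies $c>0$.

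It then remains to propagate this bound from the sphere to all of $\mathbb{R}^m$ using the degree-two homogeneity of the quadratic form. For $y\neq 0$ I would write $y=\|y\|\,u$ with $u:=y/\|y\|\in S$ and compute $\langle My,y\rangle=\|y\|^2\langle Mu,u\rangle\ge c\|y\|^2$; for $y=0$ the inequality holds trivially. This establishes $\langle My,y\rangle\ge c\|y\|^2$ for all $y$ with the strictly positive constant $c$, completing the reverse direction and hence the equivalence.
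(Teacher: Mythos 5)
Your proof is correct. The forward implication coincides with the paper's. For the reverse implication you take a genuinely different route: the paper splits $M=M_s+M_n$ into its symmetric and antisymmetric parts, diagonalizes $M_s=T^{-1}DT$ with $T$ unitary, tests against the unit vectors to see that all diagonal entries are positive, and takes $c:=\min_i d_{ii}$; you instead minimize the continuous quadratic form $y\mapsto\langle My,y\rangle$ over the compact unit sphere, obtain $c>0$ from the extreme value theorem, and propagate the bound by degree-two homogeneity. Both arguments are complete. Yours avoids the spectral theorem entirely and is the more elementary and more flexible device (the same compactness-plus-homogeneity pattern works for any continuous positively homogeneous form), while the paper's computation identifies the optimal constant explicitly as the smallest eigenvalue of the symmetric part $\tfrac{1}{2}(M+M^\top)$ --- the very observation you make in passing and set aside. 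Your reduction of strong monotonicity to the single inequality $\langle My,y\rangle\ge c\|y\|^2$ via $y:=x-\bar x$ is also a slightly cleaner way to organize the equivalence than the paper's, which carries the two-point formulation through to the end.
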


\begin{proof}
  If $f(x):=Mx$ is strongly monotone we find a constant $c>0$ such that
  for all $x\in\mathbb{R}^m$ with $x\neq 0$
  \begin{align*}
    \langle Mx,x \rangle = \langle f(x)-f(0),x-0 \rangle \ge c \|x-0\| >0,
  \end{align*}
  that means $M$ is positive definite. Next, we show the opposite direction. Let $M$ be
  positive definite. We split $M$ into its symmetric and non-symmetric part
  \begin{align*}
    M=M_s+M_n, \quad M_s = \frac{1}{2}(M+M^\top),\quad M_n = \frac{1}{2}(M-M^\top).
  \end{align*}
  Consequently, for all $x\in\mathbb{R}^m$ with $x\neq 0$,
  \begin{align*}
    \langle M_sx,x \rangle = \langle Mx,x \rangle >0.
  \end{align*}
  Since $M_s$ is symmetric, we find a unitary matrix $T$ and a diagonal matrix $D$ such that
  $M_s = T^{-1}DT$. We get that
  \begin{align*}
    0< \langle M_sx,x \rangle = \langle T^{-1}DTx,x \rangle 
    = \langle DTx,Tx \rangle = \sum_{j=1}^m d_{jj}y_j^2 \quad\text{with}\quad y:=Tx.
  \end{align*}
  Choosing the unit vectors $y:=e_i$, we find that $d_{ii}>0$ for all $i=1,...,m$.
  Defining $c:=\min_{i=1,...,m} d_ii$, we see that for all $x\in\mathbb{R}^m$ 
  \begin{align*}
    \langle Mx,x \rangle = \langle M_sx,x \rangle \ge  \sum_{j=1}^m cy_j^2
    = c \|Tx\|^2 = c \|x\|^2.
  \end{align*}  
  Finally, we obtain, for any $x,\bar x\in\mathbb{R}^m$ 
  \begin{align*}
    \langle f(x)-f(\bar x),x-\bar x \rangle =
    \langle M(x-\bar x),x-\bar x \rangle \ge  c \|x-\bar x\|^2.
  \end{align*} \qed
\end{proof}

\medskip
\begin{definition}\label{def:stronmonomxn}
  A function $f:\mathbb{R}^m\times \mathbb{R}^n\to \mathbb{R}^m$ is called
  strongly monotone with respect to $x$ if and only if there is a constant $c>0$ such that
  \begin{align*}
    \forall\,y\in\mathbb{R}^n\ \forall\,x,\bar x\in\mathbb{R}^m:
    \quad \langle f(x,y)-f(\bar x,y),x-\bar x\rangle \ge c \|x-\bar x\|^2.
  \end{align*}    
\end{definition}

\medskip
\begin{remark}
	In case of variable matrix functions $M(y)$, the function $f(x,y):=M(y)x$ might be
	not strongly monotone with respect to $x$ even if $M(y)$ is positive definite for each $y$.
	For strong monotony, one has to ensure that the eigenvalues of the symmetric part of
	$M(y)$ can be bounded from below by a constant $c>0$ independent of $y$.
\end{remark}

\begin{lemma}
  \label{lem:solvability.monotone}
  Let $f=f(x,y):\mathbb{R}^m\times \mathbb{R}^n\to \mathbb{R}^m$ be strongly monotone
  with respect to $x$ and continuous. Then, there is a uniquely defined continuous
  function $g:\mathbb{R}^n\to\mathbb{R}^m$ such that $f(g(y),y)=0$ for all
  $y\in\mathbb{R}^n$.
\end{lemma}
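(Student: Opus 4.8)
The plan is to decouple the two arguments: first solve $f(x,y)=0$ in $x$ for each fixed $y$ to obtain a well-defined map $g$, and then establish continuity of that map. So fix $y\in\mathbb{R}^n$ and write $F(x):=f(x,y)$, which is continuous and strongly monotone on $\mathbb{R}^m$ with the same constant $c$. Uniqueness of a zero is immediate: if $F(x)=F(\bar x)=0$, then strong monotonicity gives $0=\langle F(x)-F(\bar x),x-\bar x\rangle\ge c\|x-\bar x\|^2$, forcing $x=\bar x$. For existence I would exploit the coercivity built into strong monotonicity, namely
\begin{align*}
  \langle F(x),x\rangle \ge \langle F(0),x\rangle + c\|x\|^2 \ge c\|x\|^2 - \|F(0)\|\,\|x\|,
\end{align*}
so that $\langle F(x),x\rangle>0$ on every sphere $\|x\|=R$ with $R$ large enough.

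Then I would invoke the standard topological consequence of Brouwer's fixed point theorem: a continuous map $F$ on a closed ball $\overline{B_R}$ satisfying $\langle F(x),x\rangle\ge 0$ for all $\|x\|=R$ must vanish somewhere in $\overline{B_R}$. The argument is by contradiction: if $F$ never vanishes, then $x\mapsto -R\,F(x)/\|F(x)\|$ is a continuous self-map of $\overline{B_R}$, whose Brouwer fixed point $x^{*}$ satisfies $\|x^{*}\|=R$ and $\langle F(x^{*}),x^{*}\rangle=-R\|F(x^{*})\|<0$, contradicting the sign condition. Applied with our large $R$, this produces a zero of $F$, which by the uniqueness above is unique; I define $g(y)$ to be this zero.

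For continuity, let $y_k\to y$ and set $x_k:=g(y_k)$, $x:=g(y)$, so that $f(x_k,y_k)=0$ and $f(x,y)=0$. Applying strong monotonicity at the parameter $y_k$,
\begin{align*}
  c\|x_k-x\|^2 \le \langle f(x_k,y_k)-f(x,y_k),\,x_k-x\rangle = \langle -f(x,y_k),\,x_k-x\rangle \le \|f(x,y_k)\|\,\|x_k-x\|,
\end{align*}
whence $\|x_k-x\|\le \tfrac1c\|f(x,y_k)\|$. Since $f$ is continuous and $f(x,y)=0$, the right-hand side tends to $0$, giving $g(y_k)\to g(y)$ and hence continuity of $g$.

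I expect the existence (surjectivity) step for fixed $y$ to be the main obstacle: monotonicity and continuity alone do not manufacture a zero in finite dimensions without a topological input, so the coercivity estimate together with Brouwer's theorem is essential. By contrast, both uniqueness and continuity fall out directly from the strong monotonicity inequality; in particular, the continuity estimate is remarkably clean because it uses only the continuity of $y\mapsto f(x,y)$ at the single point $x=g(y)$, which is guaranteed by the assumed joint continuity of $f$, and it yields a quantitative bound rather than a mere compactness argument.
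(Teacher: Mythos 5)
Your proof is correct, and its overall skeleton coincides with the paper's: fix $y$, solve $f(\cdot,y)=0$ to define $g$, get uniqueness from the strong monotonicity inequality, and then derive continuity from the estimate $c\|g(y_k)-g(y_*)\|\le\|f(g(y_*),y_k)\|$ together with $f(g(y_*),y_*)=0$ --- your continuity argument is line-for-line the one in the paper. The genuine difference is the existence step for fixed $y$: the paper simply invokes the Browder--Minty theorem (citing Zeidler and Ortega--Rheinboldt) to produce the unique zero of the strongly monotone map $F_y$, whereas you prove existence from scratch by extracting the coercivity estimate $\langle F(x),x\rangle\ge c\|x\|^2-\|F(0)\|\,\|x\|$ and then applying the standard Brouwer-based lemma that a continuous map with $\langle F(x),x\rangle\ge 0$ on a large sphere must vanish inside the ball. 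Your route is self-contained and correctly identifies that some topological input is unavoidable in finite dimensions (monotonicity and continuity alone do not give surjectivity); it is essentially the finite-dimensional proof hiding inside the citation. What the citation buys the paper is brevity and the fact that the same statement holds verbatim in reflexive Banach spaces, which is irrelevant here since everything is finite-dimensional. Either way the lemma is established.
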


\begin{proof}
  For fixed $y\in\mathbb{R}^n$ we define $F_y:\mathbb{R}^m\to\mathbb{R}^m$ by
  \begin{align*}
    F_y(x):=f(x,y)\quad\forall\,x\in\mathbb{R}^m.
  \end{align*}
  Since $f$ is strongly monotone with respect to $x$, the function $F_y$ is
  strongly monotone. The Theorem of Browder-Minty, e.g. \cite{zeidler2013nonlinear} and \cite{ortega1970iterative}, provides a 
  unique $z_y\in\mathbb{R}^m$ such that $F_y(z_y)=0$ and, hence, $f(z_y,y)=0$. We define
  $g:\mathbb{R}^n\to\mathbb{R}^m$ by
  \begin{align*}
    g(y):= z_y.
  \end{align*}
  Obviously, $f(g(y),y)=0$ for all $y\in\mathbb{R}^n$. It remains to show that $g$ is
  continuous. Let $(y_k)$ be a convergent series in $\mathbb{R}^n$ with
  $y_k\to y_*\in\mathbb{R}^n$ for $k\to\infty$. Since $f$ is strongly monotone with respect
  to $x$, there is a constant $c>0$ such that
  \begin{align*}
    \|g(y_k)-g(y_*)\|^2
    &\le \frac{1}{c} \langle f(g(y_k),y_k) - f(g(y_*),y_k), g(y_k)-g(y_*)\rangle\\
    &\le \frac{1}{c} \| f(g(y_k),y_k) - f(g(y_*),y_k)\|\, \|g(y_k)-g(y_*)\| \\
    &= \frac{1}{c} \| f(g(y_*),y_k)\|\, \|g(y_k)-g(y_*)\| \\
    &= \frac{1}{c} \| f(g(y_*),y_k) - f(g(y_*),y_*)\|\, \|g(y_k)-g(y_*)\|.
  \end{align*}
  Since $f$ is continuous, we may conclude that $g(y_k)\to g(y_*)$ for $k\to\infty$. \qed
\end{proof}

\begin{lemma}
  \label{lem:solutionform.monotone}
  Let $M\in\mathbb{R}^{m\times k}$ be a matrix and $P\in\mathbb{R}^{k\times k}$
  be a projector along $\ker\,M$. Additionally, let
  $f=f(x,y):\mathbb{R}^m\times \mathbb{R}^n\to \mathbb{R}^m$ be
  strongly monotone with respect to $x$ and continuous as well as
  $r:\mathbb{R}^n\to \mathbb{R}^m$ be a continuous function.
  Then, there is a continuous function
  $g:\mathbb{R}^n\to\mathbb{R}^k$ such that 
  \begin{align}
    \label{eq:equivalence}
    M^\top f(M z,y) + P^\top r(y) = 0 \quad\text{if and only if}\quad
    Pz = g(y).
  \end{align}
\end{lemma}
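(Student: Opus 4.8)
The plan is to reduce the $\mathbb{R}^k$-valued equation to a square, full-dimensional strongly monotone equation posed on a complement of $\ker M$, and then invoke Lemma~\ref{lem:solvability.monotone}. The first observation is that, because $P$ is a projector along $\ker M$, we have $\ker P=\ker M$ and hence $\mathbb{R}^k=\im P\oplus\ker M$ with $Mz=MPz$ for every $z$ (since $z-Pz\in\ker P=\ker M$). Consequently the left-hand side of the equation depends on $z$ only through $w:=Pz\in\im P$, while $P^\top r(y)$ does not depend on $z$ at all. It therefore suffices to show that for each $y$ there is a unique $w\in\im P$ solving $M^\top f(Mw,y)+P^\top r(y)=0$, depending continuously on $y$; the equivalence \eqref{eq:equivalence} then follows by setting $g(y):=w$ and using $Mz=MPz$ together with uniqueness.

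To make the problem square I would fix a matrix $U\in\mathbb{R}^{k\times r_0}$ whose columns form a basis of $\im P$, where $r_0=\rank P=\rank M$, so that $w=U\xi$ is a bijective parametrization of $\im P$ by $\xi\in\mathbb{R}^{r_0}$ and $PU=U$. Writing $N:=MU$, the injectivity of $M$ on the complement $\im P$ of $\ker M$ guarantees that $N$ has full column rank, hence $\|N\eta\|\ge\sigma\|\eta\|$ for some $\sigma>0$. I would then define $h(\xi,y):=N^\top f(N\xi,y)+U^\top r(y)$ and verify that it is continuous and strongly monotone with respect to $\xi$: indeed $\langle h(\xi,y)-h(\bar\xi,y),\xi-\bar\xi\rangle=\langle f(N\xi,y)-f(N\bar\xi,y),N(\xi-\bar\xi)\rangle\ge c\|N(\xi-\bar\xi)\|^2\ge c\sigma^2\|\xi-\bar\xi\|^2$. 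Lemma~\ref{lem:solvability.monotone} then yields a unique continuous $\hat g:\mathbb{R}^n\to\mathbb{R}^{r_0}$ with $h(\hat g(y),y)=0$, and I would set $g(y):=U\hat g(y)$, which is continuous and $\im P$-valued.

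The step that needs the most care — and the crux of the argument — is showing that passing from the full equation to $h$ loses no information, i.e.\ that $M^\top f(N\xi,y)+P^\top r(y)=0$ is equivalent to $h(\xi,y)=0$. The forward implication is immediate upon left multiplication by $U^\top$, using $U^\top M^\top=N^\top$ and $U^\top P^\top=(PU)^\top=U^\top$. For the converse, note that the left-hand side lies in $\im M^\top+\im P^\top$; since $\ker P=\ker M$ gives $\im M^\top=(\ker M)^\perp=(\ker P)^\perp=\im P^\top$, the whole expression actually lies in $\im P^\top$, so it suffices that $U^\top$ be injective on $\im P^\top$. Any $v\in\im P^\top\cap\ker U^\top=\im P^\top\cap(\im P)^\perp$ satisfies $P^\top v=0$, hence $v\in\im P^\top\cap\ker P^\top=\{0\}$ because $P^\top$ is again a projector; thus $U^\top$ is injective on $\im P^\top$ and the equivalence holds. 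Assembling these pieces, for any $z$ I would write $Pz=U\xi$ and use $Mz=N\xi$ to conclude $M^\top f(Mz,y)+P^\top r(y)=0\iff h(\xi,y)=0\iff\xi=\hat g(y)\iff Pz=g(y)$, which is precisely \eqref{eq:equivalence}.
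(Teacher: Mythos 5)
Your proof is correct and follows essentially the same route as the paper: parametrize $\im\,P$ by a full-column-rank basis matrix, reduce to a square system that is strongly monotone in the reduced variable, and invoke Lemma~\ref{lem:solvability.monotone}. The only (minor) difference lies in the backward direction of the equivalence, where the paper reapplies strong monotonicity to conclude $M(z-g(y))=0$ directly, whereas you establish a purely linear-algebraic equivalence between the reduced and full equations (via $\im\,M^\top=\im\,P^\top$ and injectivity of $U^\top$ on $\im\,P^\top$) and then appeal to uniqueness of the reduced solution; both arguments are valid.
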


\begin{proof}
  In the degenerated case that $M=0$ we have $P=0$ and the zero function $g(y)\equiv 0$
  fulfills obviously the equivalence \eqref{eq:equivalence}.
  Let be $M\neq 0$ for the further considerations. We chose a 
  a basis $B$ of $\im\,P$. For $r:=\rank\, P$, we form the full-column rank matrix
  $\widetilde P\in\mathbb{R}^{k\times r}$  as a matrix whose columns
  consist of all basis vectors of $B$. By construction,
  $\ker M\widetilde P=\{0\}$ and, hence,
  the matrix $(M\widetilde P)^\top M\widetilde P$ is non-singular.
  Next, we introduce a function $F:\mathbb{R}^r\times \mathbb{R}^n\to\mathbb{R}^r$ by
  \begin{align*}
    F(u,y):= (M\widetilde P)^\top f (M\widetilde Pu,y) + \widetilde P^\top P^\top r(y).
  \end{align*}
  Since $f$ is continuous, also $F$ is continuous. From the strong monotony of $f$
  with respect to $x$ we can also conclude the strong monotony of $F$
  with respect to $u$ since there is a constant $c>0$ such that,
  for all $y\in\mathbb{R}^n$ and for all $u,\bar u\in\mathbb{R}^r$,
  \begin{align*}
    \langle F(u,y)- F(\bar u,y), u-\bar u \rangle
    &= \langle (M\widetilde P)^\top f (M\widetilde Pu,y) - (M\widetilde P)^\top f (M\widetilde P\bar u,y), u-\bar u \rangle\\
    &= \langle f (M\widetilde Pu,y) - f (M\widetilde P\bar u,y), M\widetilde Pu-M\widetilde P\bar u \rangle\\
    &\ge c \|M\widetilde Pu-M\widetilde P\bar u\|^2
  \end{align*}
  and 
  \begin{align*}
     \|u-\bar u\| &= \|((M\widetilde P)^\top M\widetilde P)^{-1} (M\widetilde P)^\top M\widetilde P(u-\bar u)\|\\
     &\le \|((M\widetilde P)^\top M\widetilde P)^{-1} (M\widetilde P)^\top\|\,\| M\widetilde P(u-\bar u)\|
  \end{align*}
  which implies
  \begin{align*}
    \langle F(u,y)- F(\bar u,y), u-\bar u \rangle
    &\ge \frac{c}{c_1} \|u-\bar u\|^2
  \end{align*}
  for $c_1:=\|((M\widetilde P)^\top M\widetilde P)^{-1} (M\widetilde P)^\top\|^2>0$ since $M$ is
  a non-zero matrix. From Lemma \ref{lem:solvability.monotone} we know that there
  is a unique continuous function $G:\mathbb{R}^n\to\mathbb{R}^r$ such that
  \begin{align*}
    F(G(y),y)=0 \quad\forall\,y\in\mathbb{R}^n.
  \end{align*}
  It means that $F(u,y)=0$ if and only if $u=G(y)$. Next, we show that the 
  function $g:\mathbb{R}^n\to\mathbb{R}^k$ defined by
  \begin{align*}
    g(y):=\widetilde PG(y)
  \end{align*}
  satisfies the equivalence \eqref{eq:equivalence}. First, we see that
  \begin{align*}
    \widetilde P^\top M^\top f (M g(y),y) + \widetilde P^\top P^\top r(y)
    = F(G(y),y) = 0.
  \end{align*}
  By construction of $\widetilde P$, we know that $\ker\,\widetilde P^\top =\ker\,P^\top$ and,
  therefore,
  \begin{align*}
    P^\top M^\top f (M g(y),y) + P^\top P^\top r(y) =0.
  \end{align*}
  Since $P$ is a projector along $\ker\,M$, we see that
  $M = MP$ and, hence,
  \begin{align*}
    M^\top f (M g(y),y) + P^\top r(y) =0.
  \end{align*}
  From here, we can directly conclude the following direction of the
  equivalence \eqref{eq:equivalence}. If $Pz = g(y)$ then $M^\top f(Mz,y) + P^\top r(y) = 0$.
  Finally, we show the opposite direction. If $M^\top f(Mz,y) + P^\top r(y) = 0$ then we again
  exploit the monotony of $f$ in order to obtain
  \begin{align*}
    0 &= \langle M^\top f(Mz,y) + P^\top r(y) - M^\top f(Mg(y),y) - P^\top r(y), z-g(y)\rangle\\
    &= \langle f(Mz,y) - f(Mg(y),y), Mz-Mg(y)\rangle 
    \ge c \| Mz-Mg(y)\|,
  \end{align*}
  that means $M(z- g(y))=0$. By assumption we have $\ker\,M= \ker\,P$ 
  and, hence, $P(z-g(y))=0$. It follows $Pz = Pg(y)=P\widetilde PG(y)=\widetilde PG(y)=g(y)$. \qed
\end{proof}

\medskip
\begin{corollary}
  \label{cor:solutionform}
  Let $M\in\mathbb{R}^{k\times m}$ be a matrix and $P\in\mathbb{R}^{k\times k}$
  be a projector along $\ker\,M$. Additionally, let
  $r:\mathbb{R}^n\to \mathbb{R}^m$ be continuous.
  Then, there is a continuous function
  $g:\mathbb{R}^n\to\mathbb{R}^k$ such that 
  \begin{align}
    \label{eq:equivalence2}
    M^\top Mz + P^\top r(y) = 0 \quad\text{if and only if}\quad Pz = g(y).
  \end{align}
\end{corollary}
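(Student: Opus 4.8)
The plan is to read the corollary as the special case of Lemma~\ref{lem:solutionform.monotone} in which the nonlinearity is taken to be the identity. That lemma handles the equation $M^\top f(Mz,y) + P^\top r(y) = 0$ for any continuous $f$ that is strongly monotone with respect to its first argument, so the first thing I would do is define
\begin{align*}
  f(x,y) := x \qquad \text{for all } (x,y)\in\mathbb{R}^m\times\mathbb{R}^n,
\end{align*}
the map that simply returns its first argument and ignores $y$.

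Next I would check that this $f$ satisfies the two hypotheses required by Lemma~\ref{lem:solutionform.monotone}. Continuity is immediate. For strong monotony with respect to $x$ in the sense of Definition~\ref{def:stronmonomxn}, I would note that for every $y\in\mathbb{R}^n$ and all $x,\bar x\in\mathbb{R}^m$,
\begin{align*}
  \langle f(x,y)-f(\bar x,y),\,x-\bar x\rangle = \langle x-\bar x,\,x-\bar x\rangle = \|x-\bar x\|^2,
\end{align*}
so the defining inequality holds with the constant $c=1$. With both hypotheses verified, the lemma furnishes a continuous function $g:\mathbb{R}^n\to\mathbb{R}^k$ for which the equivalence \eqref{eq:equivalence} is valid. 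Substituting $f(Mz,y)=Mz$ then converts the left-hand equation $M^\top f(Mz,y)+P^\top r(y)=0$ into $M^\top M z + P^\top r(y)=0$, while the right-hand condition $Pz=g(y)$ is untouched; this is precisely the asserted equivalence \eqref{eq:equivalence2}, so the proof concludes by invoking the lemma.

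I do not expect a genuine obstacle, since the corollary is a direct specialization rather than a new result: the entire content of the solvability and continuity argument already lives inside Lemma~\ref{lem:solutionform.monotone}, including the degenerate case $M=0$, which it treats internally. The only point that warrants an explicit line is confirming that the identity map really is strongly monotone in the sense of Definition~\ref{def:stronmonomxn}, which the computation above settles with $c=1$. The one thing I would keep an eye on is the dimensional bookkeeping, ensuring that $M^\top M z$ and $P^\top r(y)$ are read as elements of the same space, so that the substitution into \eqref{eq:equivalence} is literally a substitution and no reindexing is hidden in the step.
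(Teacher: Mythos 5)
Your proposal is correct and is essentially identical to the paper's own proof, which also reduces the corollary to Lemma~\ref{lem:solutionform.monotone} by taking $f(x,y):=x$; you merely spell out the (immediate) verification that the identity is continuous and strongly monotone with constant $c=1$, which the paper leaves implicit.
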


\begin{proof}
  It follows directly from Lemma \ref{lem:solutionform.monotone} using the
  function $F:\mathbb{R}^m\times \mathbb{R}^n\to \mathbb{R}^m$ defined by
  \begin{align*}
    F(x,y) := x.
  \end{align*}\qed
\end{proof}

Following \cite{Brenan_1995aa} we call a function $x(t)$ the solution of a general nonlinear DAE
	\begin{equation}
		\label{eq:brenan}
		f(x',x,t) = 0
	\end{equation}
on an interval $\mathcal{I}\subset \setR$, if $x$ is continuously differentiable on $I$ and satisfies \eqref{eq:brenan} for
all $t\in \mathcal{I}$.

\begin{assumption}
  We assume solvability of \eqref{eq:brenan}, see e.g. \cite[Definition 2.2.1]{Brenan_1995aa}, and that all functions involved are sufficiently smooth.
\end{assumption}

\begin{definition}[\cite{Brenan_1995aa}]
	\label{def:diffindex}
	The minimum number of times that all or part of \eqref{eq:brenan} must be differentiated with respect to $t$ in order to determine $x'$ as a continuous function of $x$, $t$, is the index \index{differentiation index} of the DAE.
\end{definition}

\section{Generalized Circuit Elements}
\label{sec:generalized_circuit_elements}

In this section we define new classes of \index{generalized circuit elements} generalized circuit elements motivated by the classical ones, i.e., resistances, inductances and capacitances. The first inductance-like element is based on the definition in \cite{Cortes-Garcia_2019aa}. The original version was designed to represent a specific class of models but also to be minimally invasive in the sense that the proofs in \cite{Estevez-Schwarz_2000aa} could still be used. The following definition is more general and a new proof of the corresponding index results is given in Section~\ref{sec:DAE_index}. 

\begin{definition}\label{def:ind-like}
	We define an \textbf{inductance-like} element as one element described by
	\begin{align*}
	\find\left(\dxdy{}{t}\dfind(\xind, \iind, \vind, t) , \xind, \iind, \vind,t\right) = 0
	\end{align*}
  where there is at most one differentiation $\dxdy{}{t}$ needed to obtain a
	model description of the form
  \begin{align}
		\dxdy{}{t}\xind &= \xfind(\dxdy{}{t}\vind,\xind,\iind,\vind,t) \label{eq:ind.xf}\\
		\dxdy{}{t}\iind &= \gind(\xind,\iind,\vind,t) \label{eq:ind.g}
  \end{align}
  We call it a \textbf{strongly inductance-like} element if, additionally, the function
  \begin{align}
		\strongfind(\vpind,\xind,\iind,\vind,t):=&\
		\partial_\xind \gind(\xind,\iind,\vind,t)\xfind(\vpind,\xind,\iind,\vind,t)\nonumber\\
		& +\partial_\vind \gind(\xind,\iind,\vind,t) \vpind
    \label{eq:ind.strongf}
  \end{align}
  is continuous and strongly monotone with respect to $\vpind$.
\end{definition}

\begin{proposition}\label{prop:classicind}
	Linear \index{inductance} inductances defined as
	\begin{equation*}
		\vind-L\dxdy{}{t}\iind = 0\;,
	\end{equation*}
	with $L$ being positive definite, are strongly inductance-like elements.
\end{proposition}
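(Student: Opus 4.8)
The plan is to match the abstract template of Definition~\ref{def:ind-like} to the concrete linear inductance and then reduce the strong-monotonicity requirement to Lemma~\ref{lem:posdef.monotone}. First I would identify the slots in the definition: take the differentiated quantity to be $\dfind(\xind,\iind,\vind,t):=\iind$ and the defining function to be $\find(w,\xind,\iind,\vind,t):=\vind-Lw$, so that the constitutive law $\find(\dxdy{}{t}\iind,\xind,\iind,\vind,t)=0$ reproduces $\vind-L\dxdy{}{t}\iind=0$. Since a classical inductance carries no internal degree of freedom, the variable $\xind$ is absent (dimension zero); hence equation~\eqref{eq:ind.xf} is vacuous and $\xfind$ maps into $\setR^0$.

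Next, because $L$ is positive definite it is in particular invertible, so the constitutive law can be solved for the derivative \emph{without any differentiation}, giving
\begin{align*}
  \dxdy{}{t}\iind = L^{-1}\vind =: \gind(\xind,\iind,\vind,t).
\end{align*}
This is exactly the form~\eqref{eq:ind.g}, and since zero differentiations were used the ``at most one'' requirement holds; thus the element is inductance-like. For the strong property I would compute $\strongfind$ from~\eqref{eq:ind.strongf}. As $\gind=L^{-1}\vind$ depends neither on $\xind$ (which is absent) nor in a coupled way on the remaining arguments, we have $\partial_\xind\gind=0$ and $\partial_\vind\gind=L^{-1}$, so that $\strongfind(\vpind,\xind,\iind,\vind,t)=L^{-1}\vpind$. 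This is continuous, being linear in $\vpind$.

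The only substantive step is to show that $\vpind\mapsto L^{-1}\vpind$ is strongly monotone, which by Lemma~\ref{lem:posdef.monotone} is equivalent to $L^{-1}$ being positive definite. I would verify this by the substitution $v=Lw$: for $v\neq 0$, hence $w=L^{-1}v\neq 0$, one has
\begin{align*}
  \langle L^{-1}v,v\rangle = \langle w,Lw\rangle = \langle Lw,w\rangle > 0,
\end{align*}
using the positive definiteness of $L$ and that a scalar equals its transpose. Hence $L^{-1}$ is positive definite, $\strongfind$ is strongly monotone with respect to $\vpind$, and the element is strongly inductance-like.

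I do not expect any serious obstacle here. The work is bookkeeping: fitting the abstract template to the concrete element, in particular handling the absent internal variable $\xind$ cleanly so that the $\partial_\xind\gind\,\xfind$ term drops out, and recalling the elementary fact that positive definiteness (in the sense used in Lemma~\ref{lem:posdef.monotone}) is preserved under inversion.
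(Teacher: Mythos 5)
Your proposal is correct and follows essentially the same route as the paper: solve the constitutive law for $\dxdy{}{t}\iind$ by inverting $L$ (no differentiation needed), identify $\strongfind(\vpind)=L^{-1}\vpind$, and invoke Lemma~\ref{lem:posdef.monotone}. You additionally spell out the small fact that the inverse of a positive definite matrix is positive definite, which the paper's proof takes for granted; that is a welcome but inessential refinement.
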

\begin{proof}
	By inverting $L$ we obtain without the need of any differentiation a 
	model description as required in \eqref{eq:ind.g} in Definition~\ref{def:ind-like}. 
	Furthermore, $\strongfind(\vind') = L^{-1}\vpind$ is strongly monotone with respect to $\vpind$ due to $L^{-1}$ being positive 
	definite and by using Lemma~\ref{lem:posdef.monotone} in Definition~\ref{def:stronmonomxn}. \qed
\end{proof}

\begin{proposition}\label{prop:fluxind}
	Flux formulated inductances defined as
	\begin{align*}
		\vind &= \dxdy{}{t}\Phi_{\scalebox{0.5}{$\mathrm{L}$}} \;,\\
		\Phi_{\scalebox{0.5}{$\mathrm{L}$}} &= \phi(\iind, t)\;,
	\end{align*}
	with ${\partial_{\iind}} \phi(\iind, t)$ being positive definite, are strongly inductance-like elements.
\end{proposition}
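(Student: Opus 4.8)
The plan is to verify that the flux-formulated inductance satisfies Definition~\ref{def:ind-like}, i.e., that at most one differentiation brings it into the form \eqref{eq:ind.xf}--\eqref{eq:ind.g} and that the associated function $\strongfind$ is strongly monotone. Here there is no internal variable, so $\xind$ is absent (or of dimension zero) and only \eqref{eq:ind.g} is relevant. Starting from $\vind = \dxdy{}{t}\Phi_{\scalebox{0.5}{$\mathrm{L}$}}$ and $\Phi_{\scalebox{0.5}{$\mathrm{L}$}} = \phi(\iind,t)$, I would substitute the constitutive relation into the first equation, differentiating once with respect to $t$.

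\begin{proof}
  Substituting the constitutive law $\Phi_{\scalebox{0.5}{$\mathrm{L}$}}=\phi(\iind,t)$ into $\vind=\dxdy{}{t}\Phi_{\scalebox{0.5}{$\mathrm{L}$}}$ and applying the chain rule yields, after exactly one differentiation,
  \begin{align*}
    \vind = \partial_{\iind}\phi(\iind,t)\,\dxdy{}{t}\iind + \partial_t\phi(\iind,t).
  \end{align*}
  Since $\partial_{\iind}\phi(\iind,t)$ is positive definite, it is in particular invertible, so we may solve for the derivative of the current,
  \begin{align*}
    \dxdy{}{t}\iind = \bigl(\partial_{\iind}\phi(\iind,t)\bigr)^{-1}\bigl(\vind - \partial_t\phi(\iind,t)\bigr) =: \gind(\iind,\vind,t).
  \end{align*}
  This is precisely the form \eqref{eq:ind.g}, obtained with a single differentiation, so the element is inductance-like. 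Because there is no internal variable $\xind$, the term $\partial_\xind \gind\,\xfind$ in \eqref{eq:ind.strongf} vanishes and
  \begin{align*}
    \strongfind(\vpind,\iind,\vind,t) = \partial_{\vind}\gind(\iind,\vind,t)\,\vpind = \bigl(\partial_{\iind}\phi(\iind,t)\bigr)^{-1}\vpind.
  \end{align*}
  As $\partial_{\iind}\phi(\iind,t)$ is positive definite, so is its inverse, and hence by Lemma~\ref{lem:posdef.monotone} together with Definition~\ref{def:stronmonomxn} the function $\strongfind$ is strongly monotone with respect to $\vpind$. Therefore the element is strongly inductance-like. \qed
\end{proof}

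The computation is entirely routine; the only point demanding care is the strong-monotonicity requirement. By the remark following Definition~\ref{def:stronmonomxn}, positive definiteness of $\partial_{\iind}\phi(\iind,t)$ for each fixed $(\iind,t)$ does not by itself guarantee strong monotonicity of the matrix-valued map, since the lower bound on the eigenvalues of its symmetric part must be uniform in the arguments. I expect this uniform-bound issue to be the main obstacle, and I would either invoke it as implicitly absorbed into the hypothesis (reading ``positive definite'' as ``uniformly positive definite'') or note that on the relevant compact solution sets the bound holds automatically by continuity.
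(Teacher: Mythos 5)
Your proof is correct and follows essentially the same route as the paper: differentiate the constitutive law once, invert $\partial_{\iind}\phi(\iind,t)$ to obtain the form \eqref{eq:ind.g}, and identify $\strongfind = \partial_{\iind}\phi(\iind,t)^{-1}\vpind$, which is strongly monotone by Lemma~\ref{lem:posdef.monotone}. The only immaterial difference is that the paper keeps $\Phi_{\scalebox{0.5}{$\mathrm{L}$}}$ as the internal variable $\xind$ (so that \eqref{eq:ind.xf} is realized as $\dxdy{}{t}\xind=\vind$) whereas you eliminate it by substitution; your closing caveat about needing a uniform lower eigenvalue bound for strong monotonicity is well taken, but it applies equally to the paper's own argument and is resolved by reading the hypothesis as uniform positive definiteness.
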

\begin{proof}
	we chose $\xind = \Phi_{\scalebox{0.5}{$\mathrm{L}$}}$. 
	Then, one time differentiation of the second equation yields \( \dxdy{}{t} \xind =  {\partial_{\iind}} \phi(\iind, t)	\dxdy{}{t} \iind  +  {\partial_t}\phi(\iind, t) \) 
	and exploiting the positive definiteness we write \( \dxdy{}{t} \iind \) as in \eqref{eq:ind.g} Definition~\ref{def:ind-like}, for
	\begin{align*}
		\gind(\iind,\vind,t) := {\partial_{\iind}}\phi(\iind, t)^{-1}\dxdy{}{t} \xind - {\partial_t}\phi(\iind, t) 
		= {{\partial_{\iind}}\phi(\iind, t)^{-1}} \vind - {\partial_t}\phi(\iind, t)\;.
	\end{align*}
	Consequently, $\strongfind(\vind', \iind, t) = {\partial_{\iind}}\phi(\iind, t)^{-1} \vpind$ and $\strongfind(\vind', \iind, t)$ is strongly monotone with respect to $\vpind$. The latter follows again from ${\partial_{\iind}}\phi(\iind, t)^{-1}$ being positive 
	definite and by using Lemma~\ref{lem:posdef.monotone} in Definition~\ref{def:stronmonomxn}. \qed
\end{proof}
A more complex application of an electromagnetic element complying with this definition can be found in Section~\ref{sec:ind_like_full_ME}.

\begin{definition}\label{def:cap-like}
	We define a \textbf{capacitance-like} element as one element described by
	\begin{align*}
	\fcap\left(\dxdy{}{t}\dfcap(\xcap, \icap, \vcap, t) , \xcap, \icap, \vcap,t\right) = 0
	\end{align*}
  where there is at most one differentiation $\dxdy{}{t}$ needed to obtain a
	model description of the form
  \begin{align}
		\dxdy{}{t}\xcap &= \xfcap(\dxdy{}{t}\icap,\xcap,\icap,\vcap,t) \label{eq:cap.xf}\\
		\dxdy{}{t}\vcap &= \gcap(\xcap,\icap,\vcap,t) \label{eq:cap.g}
  \end{align}
  We call it a \textbf{strongly capacitance-like} element if, additionally, the function
  \begin{align}
		\strongfcap(\ipcap,\xcap,\icap,\vcap,t):=&\
		\partial_\xcap \gcap(\xcap,\icap,\vcap,t)\xfcap(\ipcap,\xcap,\icap,\vcap,t)\nonumber\\
		& +\partial_\icap \gcap(\xcap,\icap,\vcap,t) \ipcap
    \label{eq:cap.strongf}
  \end{align}
  is continuous and strongly monotone with respect to $\ipcap$.
\end{definition}

\begin{proposition}
\label{prop:classiccap}
Linear \index{capacitance} capacitances defined as
\begin{equation*}
	C\dxdy{}{t}\vcap-\icap = 0\;,
\end{equation*}
with $C$ being positive definite, are strongly capacitance-like elements.
\end{proposition}
\begin{proof}
	Analogous to the proof in Proposition~\ref{prop:classicind}, we exploit the fact that $C$ is positive definite and here,
	$\strongfcap(\icap') = C^{-1}\ipcap$ is shown to be strongly monote with respect to $\ipcap$ by using by using Lemma~\ref{lem:posdef.monotone} 
	and Definition~\ref{def:stronmonomxn}. \qed
\end{proof}

\begin{proposition}
	\label{prop:chargecap}
	Charge formulated capacitances defined as
	\begin{align*}
	\icap &= \dxdy{}{t}q_{\scalebox{0.5}{$\mathrm{C}$}} \;,\\
	q_{\scalebox{0.5}{$\mathrm{C}$}} &= q(\vcap, t)\;,
	\end{align*}	
	with ${\partial_{\vcap}} q(\vcap, t)$ being positive definite, are strongly capacitance-like elements.
\end{proposition}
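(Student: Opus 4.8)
The plan is to follow the template established by Propositions~\ref{prop:classicind}, \ref{prop:fluxind} and \ref{prop:classiccap}, since the charge formulated capacitance is the exact capacitance-side analogue of the flux formulated inductance treated in Proposition~\ref{prop:fluxind}. First I would verify the structural form required by Definition~\ref{def:cap-like}: choosing the internal variable $\xcap := q_{\scalebox{0.5}{$\mathrm{C}$}}$, the defining equations become $\icap = \dxdy{}{t}\xcap$ together with the algebraic constraint $\xcap = q(\vcap,t)$. A single differentiation of the constraint $\xcap = q(\vcap,t)$ with respect to $t$ yields
\begin{align*}
  \dxdy{}{t}\xcap = \partial_\vcap q(\vcap,t)\,\dxdy{}{t}\vcap + \partial_t q(\vcap,t).
\end{align*}
Since $\partial_\vcap q(\vcap,t)$ is positive definite, it is in particular invertible, so this relation can be solved for $\dxdy{}{t}\vcap$, and because $\dxdy{}{t}\xcap = \icap$ this produces the required form \eqref{eq:cap.g} with
\begin{align*}
  \gcap(\vcap,t) := \partial_\vcap q(\vcap,t)^{-1}\bigl(\icap - \partial_t q(\vcap,t)\bigr).
\end{align*}
Thus at most one differentiation is needed, confirming that the element is capacitance-like.

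Next I would check the strong monotony condition promoting this to a \textbf{strongly} capacitance-like element. The key simplification is that $\gcap$ as derived above does not depend on the internal variable $\xcap$ at all, so $\partial_\xcap \gcap = 0$ and the first summand in \eqref{eq:cap.strongf} vanishes. Hence $\strongfcap$ reduces to $\partial_\icap \gcap(\vcap,t)\,\ipcap = \partial_\vcap q(\vcap,t)^{-1}\,\ipcap$. I would then invoke that the inverse of a positive definite matrix is positive definite, apply Lemma~\ref{lem:posdef.monotone} to identify the linear map $\ipcap \mapsto \partial_\vcap q(\vcap,t)^{-1}\ipcap$ as strongly monotone for each fixed $(\vcap,t)$, and conclude strong monotony with respect to $\ipcap$ via Definition~\ref{def:stronmonomxn}, exactly as in Proposition~\ref{prop:fluxind}.

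The only genuine subtlety—and the point I would be careful to state honestly—concerns the uniformity of the monotony constant. As the \emph{Remark} following Definition~\ref{def:stronmonomxn} warns, pointwise positive definiteness of a variable matrix function does not by itself guarantee strong monotony with respect to $\ipcap$ in the sense of Definition~\ref{def:stronmonomxn}, because that definition demands a single constant $c>0$ independent of the parameter $(\vcap,t)$. One must therefore assume (or it must follow from the smoothness/boundedness hypotheses of the surrounding framework) that the smallest eigenvalue of the symmetric part of $\partial_\vcap q(\vcap,t)^{-1}$ is bounded below uniformly in $(\vcap,t)$. I expect this to be the main obstacle in making the argument fully rigorous; in practice the other propositions gloss over it by treating the ``positive definite'' hypothesis as implicitly uniform, and I would follow the same convention here while flagging the uniformity requirement, so that the proof parallels Proposition~\ref{prop:fluxind} and closes with the strong monotony of $\strongfcap$ established.
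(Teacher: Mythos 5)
Your proof is correct and takes essentially the same route as the paper, which simply declares the argument analogous to Proposition~\ref{prop:fluxind} with $\xcap = q_{\scalebox{0.5}{$\mathrm{C}$}}$; your explicit computation of $\gcap$ and of $\strongfcap(\ipcap,\vcap,t)=\partial_{\vcap} q(\vcap,t)^{-1}\ipcap$ is exactly what that analogy unpacks to (the paper's printed $\partial_t q(\vcap,t)^{-1}$ is evidently a typo for $\partial_{\vcap} q(\vcap,t)^{-1}$). Your caveat about the uniformity of the monotony constant is well taken, but it applies equally to the other classical-element propositions and is glossed over by the paper in the same way.
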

\begin{proof}
	There proof is analogous to the one of Proposition~\ref{prop:fluxind} by setting $\xcap = q_{\scalebox{0.5}{$\mathrm{C}$}}$ and
	$\strongfcap(\ipcap,\vcap,t) = {\partial_t}q(\vcap,t)^{-1}\ipcap$. \qed
\end{proof}

\begin{definition}\label{def:res-like}
	We define a \textbf{resistance-like} element as one element described by
	\begin{align*}
	\fres\left(\dxdy{}{t}\dfres(\xres, \ires, \vres, t) , \xres, \ires, \vres,t\right) = 0
	\end{align*}
  where there is at most one differentiation $\dxdy{}{t}$ needed to obtain a
	model description of the form
  \begin{align}
		\dxdy{}{t}\xres &= \xfres(\xres,\ires,\vres,t) \label{eq:res.xf}\\
		\dxdy{}{t}\ires &= \gres(\dxdy{}{t}\vres,\xres,\ires,\vres,t) \label{eq:res.g}
  \end{align}
  We call it a \textbf{strongly resistance-like} element if, additionally, the function
  \begin{align}
		\gres(\vpres,\xres,\ires,\vres,t)
    \label{eq:res.strongf}
  \end{align}
  is continuous and strongly monotone with respect to $\vpres$.
\end{definition}

\begin{proposition}\label{prop:classicres}
	Linear \index{resistance} resistances defined as
	\begin{equation*}
	\vres-R\ires = 0\;,
	\end{equation*}
	with $R$ being positive definite, are strongly resistance-like elements.
\end{proposition}
\begin{proof}
	Here, the equation is differentiated once to obtain 
	$$\dxdy{}{t}\vres-R\dxdy{}{t}\ires = 0.$$
	Now, analogously to the proof in \ref{prop:classicind}, we exploit the positive definiteness of 
	$R$ to invert it and obtain a function $\gres(\vpres) = R^{-1}\vpres$, which is strongly monote
	with respect to $\vpres$. \qed
\end{proof}

\begin{remark}
  Definitions~\ref{def:ind-like}-\ref{def:res-like} are made for one-port elements or multi-port elements which are structurally identically for each port and do not change their structure, e.g. depending on state, time (or frequency). However, in practice an inductance-like device may turn into a capacitance-like device depending on its working point. Also, a two-port element may simply consist of an inductance and a capacitance. Those examples are not covered by our generalizations.
\end{remark}

\section{Circuit Structures and Circuit Graph Describing Matrices}
\label{sec:circuit_structures}
In this section we define the common ingredients for the analysis of \index{circuit} circuits, see e.g. \cite{Ruehli_2015aa,Estevez-Schwarz_2000aa}.

\begin{assumption}
  \label{ass:circuit}
  Let a connected circuit be given whose elements belong to the set of
  capacitance-like devices, inductance-like devices, resistance-like devices,
  voltage sources and current sources.
\end{assumption}

\medskip
We consider the element related incidence matrices $\Acap$, $\Aind$, $\Ares$, $\Avsrc$ and $\Acsrc$ whose entries $a_{ij}$ are defined by
\begin{align*}
  a_{ij} =
  \begin{cases}
    +1 & \text{if branch } j \text{ directs from node } i \\
    -1 & \text{if branch } j \text{ directs to node } i \\
    0 & \text{else}
  \end{cases}
\end{align*}
where the index $i$ refers to a node (except the mass node) and the index $j$ refers to branches of capacitance-like devices ($\Acap$), inductance-like devices ($\Aind$), 
resistance-like devices ($\Ares$), voltage sources ($\Avsrc$) and
current sources ($\Acsrc$).

\begin{remark}
  If Assumption \ref{ass:circuit} is fulfilled then the
  incidence matrix $A$ of the circuit is given by $A=[\Acap\,\Aind\,\Ares\,\Avsrc\,\Acsrc]$ and
  has full row rank (see~\cite{Kuh-Desoer}).
\end{remark}

\begin{lemma}
  \label{lem:loops.cutsets}
  Let a connected circuit be given and $\Axtype$ be the incidence matrix of all branches
  of type X. All other branches shall be collected in the incidence matrix $\Aytype$ such
  that the incidence matrix of the circuit is given by $A=[\Axtype\,\Aytype]$.
  Then,
  \begin{enumerate}
  \item
    the circuit contains no \index{loop} loops of only X-type branches if and only if $\Axtype$ has full
    column rank,
  \item
    the circuit contains no \index{cutset} cutsets of only X-type branches if and only if $\Aytype$ has full
    row rank.
  \end{enumerate}
\end{lemma}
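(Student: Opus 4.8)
The plan is to translate each rank condition into a statement about a kernel and then to match that kernel with the corresponding combinatorial object (loop or cutset) via Kirchhoff's laws. Recall that $\Axtype$ has full column rank exactly when $\ker\Axtype=\{0\}$, and $\Aytype$ has full row rank exactly when $\ker\Aytype^\top=\{0\}$; so the two assertions become statements about the nontriviality of these two spaces. The full row rank of $A=[\Axtype\,\Aytype]$ from the preceding remark will be the bridge used in both parts.

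For the first part I would read a vector $x\in\ker\Axtype$ as a branch-current vector supported on the X-branches that satisfies Kirchhoff's current law at every non-mass node. The one point to check is that the missing (mass-node) balance is automatic: since every column of an incidence matrix sums to zero, current conservation at all non-mass nodes forces it at the mass node as well, so $\ker\Axtype$ coincides with the full cycle space of the subgraph formed by the X-branches. I would then invoke the standard graph-theoretic fact that the incidence matrix of a graph has trivial kernel if and only if the graph is a forest, i.e. contains no cycle. Since a cycle in the X-subgraph is precisely a loop of only X-type branches, this gives $\ker\Axtype=\{0\}$ if and only if there is no X-loop, which is the first assertion.

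For the second part I would work on the dual side with node potentials. A vector $\eta\in\ker\Aytype^\top$ is a potential assignment (with the mass node at potential $0$) for which every Y-branch has zero voltage drop, i.e. its two endpoints carry equal potential. Because $A=[\Axtype\,\Aytype]$ has full row rank, any nonzero such $\eta$ must satisfy $\Axtype^\top\eta\neq 0$; hence a nonzero $\eta\in\ker\Aytype^\top$ produces a nonempty edge cut, determined by the level sets of $\eta$, that is crossed only by X-branches, and any such cut contains a minimal cutset necessarily consisting of X-type branches only. Conversely, an X-cutset splits the connected circuit into two node sets $V_1,V_2$ whose connecting branches are all X-type; setting $\eta$ to the indicator of the part not containing the mass node yields a nonzero element of $\ker\Aytype^\top$. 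This equivalence gives $\ker\Aytype^\top=\{0\}$ if and only if there is no X-cutset, which is the second assertion.

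The main obstacle is the bookkeeping at the two interfaces between linear algebra and graph theory: in the first part, justifying that dropping the mass-node row does not enlarge the kernel (handled by the vanishing column sums), and in the second part, ensuring that the cut extracted from a nonzero potential vector genuinely contains a cutset made solely of X-branches, and that conversely the indicator potential of a minimal X-cutset annihilates exactly the $\Aytype$ block. Once these combinatorial identifications are pinned down, both equivalences follow from the rank–kernel correspondence together with the forest characterization of acyclic incidence matrices.
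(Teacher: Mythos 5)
Your argument is correct, but it follows a genuinely different route from the paper. The paper's proof rests on a single classical fact from network theory (cited from Chua--Desoer--Kuh): the incidence matrix of a branch subset $S$ is non-singular if and only if $S$ forms a spanning tree. Both parts are then read off symmetrically -- no X-loops iff the X-branches extend to a spanning tree iff $\Axtype$ has full column rank; no X-cutsets iff the Y-branches contain a spanning tree iff $\Aytype$ has full row rank. You instead work directly with the two kernels: $\ker\Axtype$ is identified with the cycle space of the X-subgraph (your observation that dropping the mass-node row does not enlarge the kernel, because the columns of the full incidence matrix sum to zero, is exactly the point that needs checking there), and $\ker\Aytype^\top$ is identified with potential assignments constant across every Y-branch, whose nonzero elements are matched with X-only edge cuts via level sets, and conversely via indicator potentials of the two sides of a minimal cut. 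Your route is more self-contained and makes the loop/cutset duality explicit as a kernel/cokernel duality, at the price of some combinatorial bookkeeping (extracting a minimal cutset from a level-set cut, and verifying that a minimal cut contains \emph{all} branches between its two sides); the paper's route is shorter and uniform but delegates the entire combinatorial content to the spanning-tree characterization. Both are complete proofs of the stated equivalences.
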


\begin{proof}
  The incidence matrix of a subset $S$ of branches of a circuit is non-singular if and
  only if $S$ forms a spanning tree \cite{Kuh-Desoer}. From
  this we can conclude the following statements.
  \begin{enumerate}
  \item
    The circuit contains no loops of only X-type branches if and only if there is a
    spanning tree containing all X-type branches. The latter condition is equivalent to
    the condition that $\Axtype$ has full column rank.
  \item
    The circuit contains no cutsets of only X-type branches if and only if there is a
    spanning tree containing only Y-type branches. The latter condition is equivalent to
    the condition that $\Aytype$ has full row rank.
  \end{enumerate} \qed
\end{proof}

\begin{corollary}
  \label{cor:IcutsetVloop}
  Let Assumption \ref{ass:circuit} be fulfilled. Then,
  \begin{enumerate}
  \item
    the circuit contains no loops of only voltage sources if and only if $\Avsrc$
    has full column rank,
  \item
    the circuit contains no cutsets of only current sources if and only if
    $[\Acap\,\Aind\,\Ares\,\Avsrc]$  has full row rank.
  \end{enumerate}
\end{corollary}

Since loops of only voltage sources and cutsets of only current sources are
electrically forbidden, we suppose the following assumption to be fulfilled.

\begin{assumption}
  \label{ass:Vloops.Icutsets}
  The matrix $\Avsrc$ has full column rank and the matrix 
  $[\Acap\,\Aind\,\Ares\,\Avsrc]$  has full row rank.
\end{assumption}

\begin{definition}
  We call a loop of branches of a circuit a \textbf{CV-loop} if it contains only
  capacitance-like devices and voltage sources. We call a cutset of branches of a circuit
  an \textbf{LI-cutset} if it contains only inductance-like devices and current sources.
\end{definition}

\begin{corollary}
  \label{cor:LIcutsetCVloop}
  Let Assumption \ref{ass:circuit} be fulfilled. Then,
  \begin{enumerate}
  \item
    the circuit contains no CV-loops if and only if $[\Acap\,\Avsrc]$
    has full column rank,
  \item
    the circuit contains no LI-cutsets if and only if
    $[\Acap\,\Ares\,\Avsrc]$  has full row rank.
  \end{enumerate}
\end{corollary}

\section{DAE Index for Circuits with Generalized Lumped Models}
\label{sec:DAE_index}

Let Assumption \ref{ass:circuit} and Assumption \ref{ass:Vloops.Icutsets} be fulfilled.
Following the idea of the \index{modified nodal analysis} modified nodal analysis for circuits, we introduce the nodal potentials $e$ and form the circuit equations as
\begin{subequations}
\label{eq:MNAequations}
\begin{align}
  \label{eq:MNA.KCL}
  \Acap\icap + \Ares\ires + \Avsrc\ivsrc + \Aind\iind + \Acsrc\isrc &= 0,\\
  \label{eq:MNA.vsrc}
  \Atvsrc e &= \vsrc,\\
  \label{eq:MNA.ind}
  \find\left(\dxdy{}{t}\dfind(\xind, \iind, \Atind e, t) , \xind, \iind, \Atind e,t\right) &= 0,\\
  \label{eq:MNA.cap}
  \fcap\left(\dxdy{}{t}\dfcap(\xcap, \icap, \Atcap e, t) , \xcap, \icap, \Atcap e,t\right) &= 0,\\
  \label{eq:MNA.res}
  \fres\left(\dxdy{}{t}\dfres(\xres, \ires, \Atres e, t) , \xres, \ires, \Atres e,t\right) &= 0
\end{align}
\end{subequations}
with given source functions $\isrc=\isrc(t)$ for current
sources and $\vsrc=\vsrc(t)$  for voltage sources.

\begin{remark}\label{rem:extra_currents}
Please note that the currents $i_C$ and $i_R$ are variables of the system \eqref{eq:MNAequations}. This is in contrast to the traditional modified nodal analysis which is only based on simple lumped elements such that these variables can be eliminated by explicitly solving \eqref{eq:MNA.res} and \eqref{eq:MNA.cap} for the currents $i_C$ and $i_R$, respectively.
\end{remark}

\begin{theorem}
  \label{th:index1}
  Let Assumption \ref{ass:circuit} be fulfilled. Furthermore, let all resistance-like devices
  be strongly resistance-like devices.
  If the circuit has no CV-loops and no LI-cutsets
  then the differentiation index of the system \eqref{eq:MNAequations} is at most index 1.
\end{theorem}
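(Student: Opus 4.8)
The plan is to show that a single differentiation of the constraints, together with the element model descriptions, already expresses the full derivative vector as a continuous function of the state and $t$. First I would put every element into its differentiated form: from \eqref{eq:ind.g} and \eqref{eq:cap.g} one has $\dxdy{}{t}\iind=\gind(\xind,\iind,\Atind e,t)$ and, using $\vcap=\Atcap e$, the relation $\Atcap\,\dxdy{}{t}e=\gcap(\xcap,\icap,\Atcap e,t)$, while \eqref{eq:ind.xf}, \eqref{eq:cap.xf} and \eqref{eq:res.xf} give $\dxdy{}{t}\xind$, $\dxdy{}{t}\xcap$, $\dxdy{}{t}\xres$ in terms of $\dxdy{}{t}e$, $\dxdy{}{t}\icap$ and the state, and \eqref{eq:res.g} gives $\dxdy{}{t}\ires=\gres(\Atres\,\dxdy{}{t}e,\xres,\ires,\Atres e,t)$. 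Differentiating \eqref{eq:MNA.vsrc} yields $\Atvsrc\,\dxdy{}{t}e=\dxdy{}{t}\vsrc$, and differentiating \eqref{eq:MNA.KCL} produces a relation in which $\dxdy{}{t}\iind$ and $\dxdy{}{t}\isrc$ are already known. Hence the only derivatives still to be pinned down are $\dxdy{}{t}e$, $\dxdy{}{t}\ivsrc$ and $\dxdy{}{t}\icap$, as all others follow explicitly once these are available.

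The core of the argument is determining $\dxdy{}{t}e$. I would use the projector $\Qcapvsrc$ onto $\ker[\Acap\;\Avsrc]^\top$ with complement $\Pcapvsrc=I-\Qcapvsrc$. The two constraints $\Atcap\,\dxdy{}{t}e=\gcap(\cdots)$ and $\Atvsrc\,\dxdy{}{t}e=\dxdy{}{t}\vsrc$ prescribe $[\Acap\;\Avsrc]^\top\dxdy{}{t}e$; since the absence of CV-loops makes $[\Acap\;\Avsrc]$ of full column rank (Corollary~\ref{cor:LIcutsetCVloop}), this map is injective on $\im\Pcapvsrc$ and the prescription fixes $\Pcapvsrc\,\dxdy{}{t}e$ as a continuous function of the state and $t$. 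To capture the remaining part $\eta:=\Qcapvsrc\,\dxdy{}{t}e$, I multiply the differentiated \eqref{eq:MNA.KCL} by $\Qtcapvsrc$; because $\Atcap\Qcapvsrc=0$ and $\Atvsrc\Qcapvsrc=0$, the terms $\Acap\,\dxdy{}{t}\icap$ and $\Avsrc\,\dxdy{}{t}\ivsrc$ drop out and one is left with a closed equation $\Qtcapvsrc\Ares\,\gres(\Atres\,\dxdy{}{t}e,\cdots)=\Qtcapvsrc\,r$ with known right-hand side.

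I would then solve this for $\eta$ using the monotone-solvability machinery of Lemma~\ref{lem:solvability.monotone} and Lemma~\ref{lem:solutionform.monotone}. Writing $\Atres\,\dxdy{}{t}e=\Atres\eta+\Atres\Pcapvsrc\,\dxdy{}{t}e$ with the second summand already known, the reduced map $\eta\mapsto\Qtcapvsrc\Ares\,\gres(\Atres\eta+\text{const},\cdots)$ is continuous and, restricted to $\im\Qcapvsrc$, strongly monotone: pairing with $\eta_1-\eta_2\in\im\Qcapvsrc$ moves $\Qtcapvsrc$ onto the second argument, where $\Qcapvsrc(\eta_1-\eta_2)=\eta_1-\eta_2$, reducing the inner product to $\langle\gres(\Atres\eta_1+\cdots)-\gres(\Atres\eta_2+\cdots),\Atres(\eta_1-\eta_2)\rangle\ge c\|\Atres(\eta_1-\eta_2)\|^2$ by strong monotonicity of $\gres$. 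The closing estimate $\|\Atres(\eta_1-\eta_2)\|^2\ge\tilde c\|\eta_1-\eta_2\|^2$ is exactly where the no-LI-cutset hypothesis enters: any $\eta\in\im\Qcapvsrc=\ker[\Acap\;\Avsrc]^\top$ with $\Atres\eta=0$ lies in $\ker[\Acap\;\Ares\;\Avsrc]^\top$, which is trivial because that matrix has full row rank (Corollary~\ref{cor:LIcutsetCVloop}); thus $\Atres$ is injective, hence bounded below, on $\im\Qcapvsrc$. This gives a unique continuous $\eta$, so $\dxdy{}{t}e=\Pcapvsrc\,\dxdy{}{t}e+\eta$ is a continuous function of the state and $t$.

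With $\dxdy{}{t}e$ in hand, $\dxdy{}{t}\ires$, $\dxdy{}{t}\xind$, $\dxdy{}{t}\xres$ are explicit, and the differentiated \eqref{eq:MNA.KCL} collapses to $\Acap\,\dxdy{}{t}\icap+\Avsrc\,\dxdy{}{t}\ivsrc=s$ with $s$ known; the $\Qtcapvsrc$-projected equation guarantees $s\in\ker\Qtcapvsrc=\im[\Acap\;\Avsrc]$, so full column rank again yields a unique continuous pair $(\dxdy{}{t}\icap,\dxdy{}{t}\ivsrc)$, after which $\dxdy{}{t}\xcap=\xfcap(\dxdy{}{t}\icap,\cdots)$ follows. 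Every derivative is then a continuous function of the state and $t$ after one differentiation, so the index is at most one. I expect the main obstacle to be the careful bookkeeping of the two projector identities and casting the reduced $\Qcapvsrc$-equation precisely into the hypotheses of Lemma~\ref{lem:solutionform.monotone} — specifically verifying strong monotonicity of the composed map and the bounded-below property of $\Atres$ on $\im\Qcapvsrc$, which is the one point where both topological assumptions (no CV-loops and no LI-cutsets) are needed simultaneously.
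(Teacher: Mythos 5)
Your proposal is correct and follows essentially the same route as the paper's proof: the same $\Pcapvsrc/\Qcapvsrc$ splitting of $\dxdy{}{t}e$, the same use of the monotone-solvability lemmas on the $\Qtcapvsrc$-projected Kirchhoff current law, and the same final resolution of $\dxdy{}{t}\icap,\dxdy{}{t}\ivsrc$ via the full column rank of $[\Acap\,\Avsrc]$. The only (harmless) deviations are that you fold the paper's intermediate projector $\Presmcapvsrc$ along $\ker\,\Atres\Qcapvsrc$ into a direct bounded-below argument for $\Atres$ on $\im\,\Qcapvsrc$, and two small misattributions in your commentary: the injectivity of $[\Acap\,\Avsrc]^\top$ on $\im\,\Pcapvsrc$ follows from the projector definition alone (not from the absence of CV-loops), and the no-CV-loop hypothesis is not needed at the $\Atres$ estimate but only later for $\dxdy{}{t}\icap,\dxdy{}{t}\ivsrc$, exactly as you in fact use it.
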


\begin{proof}
  Let $\Qcapvsrc$ be a projector onto $\ker\,[\Acap\,\Avsrc]^\top$ and
  $\Pcapvsrc:=I-\Qcapvsrc$. It allows us to split
  \begin{align*}
    e &= \Pcapvsrc e+\Qcapvsrc e.
  \end{align*}
  For the capacitance-like devices and the voltage sources we find after
  at most one differentiation of the device equations \eqref{eq:MNA.cap} and
  \eqref{eq:MNA.vsrc} that
  \begin{align}
    \label{eq:vcapvvsrc}
    \Atcap \dxdy{}{t}e = \gcap(\xcap,\icap,\Atcap e,t)
    \quad\text{and}\quad
    \Atvsrc \dxdy{}{t}e = \dxdy{}{t}\vsrc.
  \end{align}
  It implies
  \begin{align*}
    [\Acap\,\Avsrc]
    \left(
    \begin{bmatrix}
      \Atcap\\
      \Atvsrc
    \end{bmatrix} \dxdy{}{t}e
    -
   \begin{bmatrix}
      \gcap(\xcap,\icap,\Atcap e,t)\\
      \dxdy{}{t}\vsrc
    \end{bmatrix}
    \right)
    &=0.
  \end{align*}
  Applying Corollary \ref{cor:solutionform} for $M := [\Acap\,\Avsrc]^\top$,
  $P:=\Pcapvsrc$,
  \begin{align*}
    z:=\dxdy{}{t}e,\
    y:=(\xcap,\icap,e,t),\
    f(y):=\begin{bmatrix} \gcap(\xcap,\icap,\Atcap e,t)\\ \dxdy{}{t}\vsrc(t) \end{bmatrix},\ 
    r(y):=0
  \end{align*}    
  we find a continuous function $f_1$ such that 
  \begin{align}
    \label{eq:Pcapvsrc.e}
    \Pcapvsrc \dxdy{}{t}e = f_1(\xcap,\icap,e,t).
  \end{align}
  Next we exploit the nodal equations \eqref{eq:MNA.KCL}. Multiplication by $\Qtcapvsrc$
  and one differentiation yields
  \begin{align}
    \label{eq:Qtcapvsrc.KCL}
    \Qtcapvsrc(\Ares \dxdy{}{t}\ires + \Aind \dxdy{}{t}\iind + \Acsrc \dxdy{}{t}\isrc)=0.
  \end{align}
  For the resistance-like and inductance-like devices we get after
  at most one differentiation of the device equations \eqref{eq:MNA.res} and 
  \eqref{eq:MNA.ind} that
  \begin{align}
    \label{eq:ires}
    \dxdy{}{t}\ires = \gres(\dxdy{}{t}\Atres e,\xres,\ires,\Atres e,t)
    = \gres(\Atres\Qcapvsrc\dxdy{}{t}e+\Atres\Pcapvsrc\dxdy{}{t}e,\xres,\ires,\Atres e,t)
  \end{align}
  and
  \begin{align}
    \label{eq:iind}
    \dxdy{}{t}\iind = \gind(\xind,\iind,\Atind e,t).
  \end{align}  
  Together with \eqref{eq:Pcapvsrc.e} and \eqref{eq:Qtcapvsrc.KCL}
  we obtain
  \begin{align}
    \Qtcapvsrc\left(
      \Ares \gres(\Atres\Qcapvsrc\dxdy{}{t}e+\Atres f_1(\xcap,\icap,e,t),
      \xres,\ires,\Atres e,t) \right. & \nonumber\\
      \left. + \Aind \gind(\xind,\iind,\Atind e,t) + \Acsrc \dxdy{}{t}\isrc
    \right) &=0.
    \label{eq:Qtcapvsrc} 
  \end{align}
  We choose a projector $\Presmcapvsrc$ along $\ker\,\Atres\Qcapvsrc$.
  Then, multiplication of \eqref{eq:Qtcapvsrc} by $\Ptresmcapvsrc$ yields
  \begin{align}
    \Qtcapvsrc
      \Ares \gres(\Atres\Qcapvsrc\dxdy{}{t}e+\Atres f_1(\xcap,\icap,e,t),
      \xres,\ires,\Atres e,t) & \nonumber\\
      + \Ptresmcapvsrc \Qtcapvsrc
      ( \Aind \gind(\xind,\iind,\Atind e,t) + \Acsrc \dxdy{}{t}\isrc) &=0.
      \label{eq:Ptresmcapvsrc}
  \end{align}
  It allows us to apply Lemma \ref{lem:solutionform.monotone} for
  \begin{align*}
    M:= \Atres\,\Qcapvsrc,\
    P:= \Presmcapvsrc,\
    z:=\dxdy{}{t}e,\
    y:=(\xcap,\icap,\xres,\ires,\xind,\iind,e,t),
  \end{align*}    
  and
  \begin{align*}      
    f(x,y)&:=
      \gres(x+\Atres f_1(\xcap,\icap,e,t),\xres,\ires,\Atres e,t),\\
    r(y)&:= \Qtcapvsrc( \Aind \gind(\xind,\iind,\Atind e,t) + \Acsrc \dxdy{}{t}\isrc).
  \end{align*}    
  Thus, we find a continuous function $f_2$ such that 
  \begin{align}
    \label{eq:Presmcapvsrc.e}
    \Presmcapvsrc \dxdy{}{t}e = f_2(\xcap,\icap,\xres,\ires,\xind,\iind,e,t).
  \end{align}
  Since the circuit does not contain LI-cutsets, the matrix
  $[\Acap\,\Ares\,\Avsrc]^\top$ has full column rank (see Corollary
  \ref{cor:LIcutsetCVloop}). It implies for $\Qresmcapvsrc:=I-\Presmcapvsrc$ that
  \begin{align*}
    \ker\,\Qcapvsrc = \ker\,\Atres \Qcapvsrc = \ker\,\Presmcapvsrc = \im\,\Qresmcapvsrc
  \end{align*}
  and, therefore, $\Qcapvsrc=\Qcapvsrc\Presmcapvsrc$. Consequently,
  \begin{align}
    \label{eq:Qcapvsrc.e}
    \Qcapvsrc \dxdy{}{t}e = \Qcapvsrc f_2(\xcap,\icap,\xres,\ires,\xind,\iind,e,t).
  \end{align}
  Regarding \eqref{eq:Pcapvsrc.e}, \eqref{eq:Qcapvsrc.e} and \eqref{eq:ires},
  we find continuous functions $f_3$ and $f_4$ such that
  \begin{align}
    \label{eq:e.ires}
    \dxdy{}{t}e = f_3(\xcap,\icap,\xres,\ires,\xind,\iind,e,t)
    \quad\text{and}\quad
    \dxdy{}{t}\ires = f_4(\xcap,\icap,\xres,\ires,\xind,\iind,e,t).
  \end{align}
  Using again \eqref{eq:MNA.KCL}, we get
  \begin{align*}
    [\Acap\,\Avsrc] \begin{bmatrix} \dxdy{}{t}\icap \\[0.5ex] \dxdy{}{t}\ivsrc \end{bmatrix}
    + \Ares\dxdy{}{t}\ires + \Aind\dxdy{}{t}\iind + \Acsrc\dxdy{}{t}\isrc &= 0.
  \end{align*}
  Together with \eqref{eq:e.ires} and \eqref{eq:iind} we have
  \begin{align}
    \label{eq:icap.ivsrc.KCL}
    [\Acap\,\Avsrc] \begin{bmatrix} \dxdy{}{t}\icap \\[0.5ex] \dxdy{}{t}\ivsrc \end{bmatrix}
    + \Ares f_4(\xcap,\icap,\xres,\ires,\xind,\iind,e,t)
    + \Aind \gind(\xind,\iind,\Atind e,t) + \Acsrc\dxdy{}{t}\isrc &= 0.
  \end{align}
  Since the circuit does not contain CV-loops, the matrix $[\Acap\,\Avsrc]$ has full
  column rank and, hence, $\ker\,[\Acap\,\Avsrc]={0}$. Multiplying \eqref{eq:icap.ivsrc.KCL}
  by $[\Acap\,\Avsrc]^\top$ allows us to apply Corollary \ref{cor:solutionform} for
  \begin{align*}
    M:=[\Acap\,\Avsrc],\ P:=I,\
    z:= \begin{bmatrix} \dxdy{}{t}\icap \\[0.5ex] \dxdy{}{t}\ivsrc \end{bmatrix},\
    y:= (\xcap,\icap,\xres,\ires,\xind,\iind,e,t)
  \end{align*}
  and
  \begin{align*}      
    f(y):=
      \Ares f_4(\xcap,\icap,\xres,\ires,\xind,\iind,e,t)
     + \Aind \gind(\xind,\iind,\Atind e,t) + \Acsrc\dxdy{}{t}\isrc(t).
  \end{align*}    
  Consequently, we find a continuous function $f_5$ such that
  \begin{align}
    \label{eq:icap.ivsrc}
    \begin{bmatrix} \dxdy{}{t}\icap \\[0.5ex] \dxdy{}{t}\ivsrc \end{bmatrix}
    = f_5(\xcap,\icap,\xres,\ires,\xind,\iind,e,t).
  \end{align}
  Finally, we obtain from \eqref{eq:ind.xf} and \eqref{eq:e.ires} that
  \begin{align}
    \label{eq:xind.index1}
    \dxdy{}{t}\xind &= \xfind(\Atind f_3(\xcap,\icap,\xres,\ires,\xind,\iind,e,t),\xind,\iind,\vind,t)
  \end{align}
  and from \eqref{eq:cap.xf} and \eqref{eq:icap.ivsrc} that
  \begin{align}
    \label{eq:xcap.index1}
    \dxdy{}{t}\xcap &= \xfcap([I\,0]f_5(\xcap,\icap,\xres,\ires,\xind,\iind,e,t),
      \xcap,\icap,\vcap,t).
  \end{align}
  Consequently, the equations \eqref{eq:e.ires}, \eqref{eq:iind}, \eqref{eq:icap.ivsrc} and
  \eqref{eq:xind.index1}, \eqref{eq:xcap.index1}, \eqref{eq:res.xf} represent an explicit
  ordinary differential equation system. That means the differentiation index of the
  circuit system \eqref{eq:MNAequations} is at most 1. \qed
\end{proof}

\begin{theorem}
  \label{th:index2}
  Let Assumption \ref{ass:circuit} and Assumption \ref{ass:Vloops.Icutsets} be fulfilled.
  Furthermore, let all resistance-like devices
  be strongly resistance-like devices. Additionally, let all inductance-like devices belonging
  to LI-cutsets be strongly inductance-like devices and 
  all capacitance-like devices belonging to CV-loops be strongly capacitance-like devices.
  Then, the differentiation index of the system \eqref{eq:MNAequations} is at most index 2.
\end{theorem}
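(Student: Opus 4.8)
The plan is to mirror the structure of the index-1 proof in Theorem~\ref{th:index1} as far as possible, but to account for the fact that CV-loops and LI-cutsets now force certain derivatives to be recovered only after a \emph{second} differentiation. The key structural change is that $[\Acap\,\Avsrc]$ no longer has full column rank (CV-loops exist) and $[\Acap\,\Ares\,\Avsrc]$ no longer has full row rank (LI-cutsets exist), so the projector arguments that closed the system in one differentiation must be iterated. First I would reuse the splitting $e=\Pcapvsrc e+\Qcapvsrc e$ and derive, exactly as before, that $\Pcapvsrc\dxdy{}{t}e=f_1(\xcap,\icap,e,t)$ from \eqref{eq:MNA.cap} and \eqref{eq:MNA.vsrc} via Corollary~\ref{cor:solutionform}. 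This part is unaffected because it only uses that $\Pcapvsrc$ projects onto $(\ker[\Acap\,\Avsrc]^\top)^\perp$, which holds regardless of column rank.

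Next I would treat the resistance-like contribution. Multiplying \eqref{eq:MNA.KCL} by $\Qtcapvsrc$ and differentiating once gives \eqref{eq:Qtcapvsrc.KCL}, and substituting the strongly-resistance-like model \eqref{eq:res.g} together with \eqref{eq:Pcapvsrc.e} yields an equation of the same form as \eqref{eq:Qtcapvsrc}. Applying Lemma~\ref{lem:solutionform.monotone} with $M:=\Atres\Qcapvsrc$ and the strongly monotone $\gres$ recovers $\Presmcapvsrc\dxdy{}{t}e$ as a continuous function, i.e.\ \eqref{eq:Presmcapvsrc.e}. The crucial difference from Theorem~\ref{th:index1} is what remains: since LI-cutsets are now present, $\im\,\Qresmcapvsrc$ is strictly larger than before, so $\Qresmcapvsrc\dxdy{}{t}e$ is \emph{not} yet determined. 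Here I would introduce a further projector, say $\PLIcut$ onto the space associated with $[\Acap\,\Ares\,\Avsrc]^\top$, splitting $\Qresmcapvsrc\dxdy{}{t}e$ into a part fixed by the equations already obtained and a genuinely undetermined remainder lying in the span of the LI-cutset directions.

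The main obstacle, and the heart of the index-2 argument, is to show that this remaining component is recovered only after a \emph{second} differentiation, using the strong inductance-like and strong capacitance-like hypotheses. For the LI-cutset directions I would differentiate the relevant projected KCL equation once more and substitute $\dxdy{}{t}\iind=\gind(\cdots)$ from \eqref{eq:ind.g}; differentiating $\gind$ and inserting \eqref{eq:ind.xf} produces precisely the combination $\strongfind$ defined in \eqref{eq:ind.strongf}, whose strong monotonicity with respect to $\vpind=\dxdy{}{t}\vind=\Atind\dxdy{}{t}e$ lets me invoke Lemma~\ref{lem:solutionform.monotone} to solve for the missing piece of $\dxdy{}{t}e$. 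Dually, the CV-loop component of $\dxdy{}{t}(\icap,\ivsrc)$, which the full-column-rank step \eqref{eq:icap.ivsrc} could previously extract directly, must now be obtained by differentiating \eqref{eq:MNA.cap} a second time, at which point $\strongfcap$ from \eqref{eq:cap.strongf} appears and its strong monotonicity with respect to $\ipcap$ again permits solving via Lemma~\ref{lem:solutionform.monotone}. The delicate bookkeeping is to verify that the projector ranges nest correctly (so that each monotonicity lemma applies to exactly the undetermined directions and nothing is double-counted), and to confirm that no quantity requires a \emph{third} differentiation. Once $\dxdy{}{t}e$, $\dxdy{}{t}\ires$, $\dxdy{}{t}\iind$, $\dxdy{}{t}(\icap,\ivsrc)$, $\dxdy{}{t}\xind$ and $\dxdy{}{t}\xcap$ are all expressed as continuous functions of the state and $t$, the system is an explicit ODE and the differentiation index is at most $2$.
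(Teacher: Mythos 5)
Your plan follows essentially the same route as the paper's proof: reuse the index-1 derivation up to $\Presmcapvsrc\dxdy{}{t}e$, then recover the LI-cutset directions of $\dxdy{}{t}e$ by differentiating the $\Qtresmcapvsrc\Qtcapvsrc$-projected KCL constraint once more so that $\strongfind$ appears, and recover the CV-loop part of $\dxdy{}{t}(\icap,\ivsrc)$ by differentiating the $\QtCVloop$-projected voltage relations so that $\strongfcap$ appears, each time closing with Lemma~\ref{lem:solutionform.monotone}. The one detail you defer to ``bookkeeping'' is exactly where Assumption~\ref{ass:Vloops.Icutsets} enters: the absence of pure I-cutsets gives $\ker\,\Atind\Qcapvsrc\Qresmcapvsrc=\ker\,\Qcapvsrc\Qresmcapvsrc$, and the absence of pure V-loops gives $\ker\,[I\;0]\,\QCVloop=\ker\,\QCVloop$, which is what lets the monotonicity lemma determine precisely the remaining undetermined components.
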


\begin{proof}
  First, we follow the proof of Theorem \ref{th:index1} and derive the equations
  \eqref{eq:vcapvvsrc}-\eqref{eq:Presmcapvsrc.e}.
  Then, multiplication of \eqref{eq:Qtcapvsrc} by $\Qtresmcapvsrc:=I-\Ptresmcapvsrc$ yields
  \begin{align}
      \Qtresmcapvsrc \Qtcapvsrc
      ( \Aind \gind(\xind,\iind,\Atind e,t) + \Acsrc \dxdy{}{t}\isrc) &=0.
      \label{eq:Qtresmcapvsrc}
  \end{align}
  Differentiating \eqref{eq:Qtresmcapvsrc} once again, we obtain
  \begin{align*}
      \Qtresmcapvsrc \Qtcapvsrc \left(
      \Aind \partial_\xind\gind(\xind,\iind,\Atind e,t) \dxdy{}{t}\xind 
      + \Aind \partial_\iind\gind(\xind,\iind,\Atind e,t) \dxdy{}{t}\iind \right. & \\
      + \Aind \partial_\vind\gind(\xind,\iind,\Atind e,t) \dxdy{}{t}\Atind e 
      + \Aind \partial_t\gind(\xind,\iind,\Atind e,t) 
      \left.+ \Acsrc \dxdy{^2}{t^2}\isrc \right) &=0.
  \end{align*}
  Next, we plug in \eqref{eq:ind.xf} and \eqref{eq:iind}. Hence,
  \begin{align*}
      \Qtresmcapvsrc \Qtcapvsrc \left(
      \Aind \partial_\xind\gind(\xind,\iind,\Atind e,t) \xfind(\dxdy{}{t}\Atind e,\xind,\iind,\vind,t)
      + \Aind \partial_\vind\gind(\xind,\iind,\Atind e,t) \dxdy{}{t}\Atind e 
      \right. & \\
      + \Aind \partial_\iind\gind(\xind,\iind,\Atind e,t) \gind(\xind,\iind,\Atind e,t)
      + \Aind \partial_t\gind(\xind,\iind,\Atind e,t) 
      \left.+ \Acsrc \dxdy{^2}{t^2}\isrc \right) =&\ 0.
  \end{align*}
  Using \eqref{eq:ind.strongf}, we see that
  \begin{align}
      \Qtresmcapvsrc \Qtcapvsrc \left(
      \Aind \strongfind(\dxdy{}{t}\Atind e,\xind,\iind,\Atind e,t)
      + \Aind \partial_t\gind(\xind,\iind,\Atind e,t) 
      \right. & \nonumber \\
      + \Aind \partial_\iind\gind(\xind,\iind,\Atind e,t) \gind(\xind,\iind,\Atind e,t)
      \left.+ \Acsrc \dxdy{^2}{t^2}\isrc \right) =&\ 0.
      \label{eq:dtQtresmcapvsrc}
  \end{align}
  Regarding \eqref{eq:Presmcapvsrc.e} and \eqref{eq:Pcapvsrc.e}, we can 
  split
  \begin{align*}
    \dxdy{}{t}\Atind e &= \Atind\Qcapvsrc\Qresmcapvsrc \dxdy{}{t} e
    + \Atind\Qcapvsrc\Presmcapvsrc \dxdy{}{t} e
    + \Atind\Pcapvsrc \dxdy{}{t} e\\
    &= \Atind\Qcapvsrc\Qresmcapvsrc \dxdy{}{t}e
    + \Atind\Qcapvsrc f_2(\xcap,\icap,\xres,\ires,\xind,\iind,e,t)
    + \Atind f_1(\xcap,\icap,e,t).
  \end{align*}
  We choose a projector $\PLIcut$ along $\ker \Atind\Qcapvsrc\Qresmcapvsrc$.
  Since the circuit does not contain I-cutsets, the matrix
  $[\Acap\,\Ares\,\Avsrc\,\Aind]^\top$ has full column rank (see Corollary
  \ref{cor:IcutsetVloop}). It implies, for $\QLIcut:=I-\PLIcut$, that
  \begin{align*}
    \ker\,\Qcapvsrc\Qresmcapvsrc = \ker\,\Atind\Qcapvsrc\Qresmcapvsrc
    = \ker\,\PLIcut = \im\,\QLIcut
  \end{align*}
  and, therefore, $\Qcapvsrc\Qresmcapvsrc =\Qcapvsrc\Qresmcapvsrc \PLIcut$
  as well as $\Qtresmcapvsrc\Qtcapvsrc =\PtLIcut\Qtresmcapvsrc\Qtcapvsrc$.
  Consequently, we can apply Lemma \ref{lem:solutionform.monotone} 
  onto \eqref{eq:dtQtresmcapvsrc} with
  \begin{align*}
    M:= \Atind\Qcapvsrc\Qresmcapvsrc,\
    P:=\PLIcut,\
    z:=\dxdy{}{t}e,\
    y:=(\xcap,\icap,\xres,\ires,\xind,\iind,e,t),
  \end{align*}    
  and
  \begin{align*}      
    f(x,y):=&\
      \strongfind(x+\Atind\Qcapvsrc f_2(\xcap,\icap,\xres,\ires,\xind,\iind,e,t)
    + \Atind f_1(\xcap,\icap,e,t),\xind,\iind,\Atind e,t),\\
    r(y):=&\ \Qtresmcapvsrc \Qtcapvsrc \Aind \partial_\iind\gind(\xind,\iind,\Atind e,t)
                \gind(\xind,\iind,\Atind e,t) \\
     &{}+\Qtresmcapvsrc \Qtcapvsrc \left(
      \Aind \partial_t\gind(\xind,\iind,\Atind e,t)
      + \Acsrc \dxdy{^2}{t^2}\isrc(t) \right).
  \end{align*}    
  Thus, we find a continuous function $f_6$ such that 
  \begin{align}
    \label{eq:PLIcut.e}
    \PLIcut \dxdy{}{t}e = f_6(\xcap,\icap,\xres,\ires,\xind,\iind,e,t).
  \end{align}
  implying
  \begin{align}
    \label{eq:Qresmcapvsrc.e}
    \Qcapvsrc\Qresmcapvsrc \dxdy{}{t}e =
    \Qcapvsrc\Qresmcapvsrc f_6(\xcap,\icap,\xres,\ires,\xind,\iind,e,t).
  \end{align}
  Regarding \eqref{eq:Presmcapvsrc.e} and \eqref{eq:Pcapvsrc.e} again, we obtain
  \begin{align}
    \label{eq:e.index2}
    \dxdy{}{t} e &= f_7(\xcap,\icap,\xres,\ires,\xind,\iind,e,t)
  \end{align}
  for
  \begin{align*}
    f_7(\xcap,\icap,\xres,\ires,\xind,\iind,e,t) :=&\
    \Qcapvsrc\Qresmcapvsrc f_6(\xcap,\icap,\xres,\ires,\xind,\iind,e,t)\\
    &{}+ \Qcapvsrc f_2(\xcap,\icap,\xres,\ires,\xind,\iind,e,t)
    + f_1(\xcap,\icap,e,t).
  \end{align*}
  Regarding \eqref{eq:ires}, we get a continuous function $f_7$ such that
  \begin{align}
    \label{eq:e.ires.index2}
    \dxdy{}{t}\ires = f_7(\xcap,\icap,\xres,\ires,\xind,\iind,e,t).
  \end{align}
  Using again \eqref{eq:MNA.KCL}, we get
  \begin{align*}
    [\Acap\,\Avsrc] \begin{bmatrix} \dxdy{}{t}\icap \\[0.5ex] \dxdy{}{t}\ivsrc \end{bmatrix}
    + \Ares\dxdy{}{t}\ires + \Aind\dxdy{}{t}\iind + \Acsrc\dxdy{}{t}\isrc &= 0.
  \end{align*}
  Together with \eqref{eq:e.ires.index2} and \eqref{eq:iind} we have
  \begin{align}
    \label{eq:icap.ivsrc.KCL.index2}
    [\Acap\,\Avsrc] \begin{bmatrix} \dxdy{}{t}\icap \\[0.5ex] \dxdy{}{t}\ivsrc \end{bmatrix}
    + \Ares f_7(\xcap,\icap,\xres,\ires,\xind,\iind,e,t)
    + \Aind \gind(\xind,\iind,\Atind e,t) + \Acsrc\dxdy{}{t}\isrc &= 0.
  \end{align}
  We choose a projector $\PCVloop$ along $\ker\,[\Acap\,\Avsrc]$.
  Multiplying \eqref{eq:icap.ivsrc.KCL.index2}
  by $[\Acap\,\Avsrc]^\top$ allows us to apply Corollary \ref{cor:solutionform} for
  \begin{align*}
    M:=[\Acap\,\Avsrc],\ P:=\PCVloop,\
    z:= \begin{bmatrix} \dxdy{}{t}\icap \\[0.5ex] \dxdy{}{t}\ivsrc \end{bmatrix},\
    y:= (\xcap,\icap,\xres,\ires,\xind,\iind,e,t)
  \end{align*}
  and
  \begin{align*}      
    f(y):=
      \Ares f_7(\xcap,\icap,\xres,\ires,\xind,\iind,e,t)
     + \Aind \gind(\xind,\iind,\Atind e,t) + \Acsrc\dxdy{}{t}\isrc(t).
  \end{align*}    
  Consequently, we find a continuous function $f_8$ such that
  \begin{align}
    \label{eq:icap.ivsrc.index2}
    \PCVloop\begin{bmatrix} \dxdy{}{t}\icap \\[0.5ex] \dxdy{}{t}\ivsrc \end{bmatrix}
    = f_8(\xcap,\icap,\xres,\ires,\xind,\iind,e,t).
  \end{align}
  Rewriting \eqref{eq:vcapvvsrc} as equation system in column form and
  multiplication by $\QtCVloop$ yields
  \begin{align*}
    \QtCVloop \begin{bmatrix}
      \gcap(\xcap,\icap,\Atcap e,t)\\
      \dxdy{}{t}\vsrc
    \end{bmatrix}
    =0.
  \end{align*}
  Differentiating this equation and regarding \eqref{eq:cap.xf},
  \eqref{eq:e.index2} as well as \eqref{eq:cap.strongf}, we obtain
  \begin{align}
    \label{eq:vcapvvsrc.index2}
    \QtCVloop
    \begin{bmatrix}
      \dxdy{}{t}\gcap(\xcap,\icap,\Atcap e,t)\\
      \dxdy{^2}{t^2}\vsrc
    \end{bmatrix}
    =0
  \end{align}
  with
  \begin{align*}
    \lefteqn{\hspace{-5ex}\dxdy{}{t}\gcap(\xcap,\icap,\Atcap e,t)}& \\
    =&\ \partial_\xcap \gcap(\xcap,\icap,\Atcap e,t) \xfcap(\dxdy{}{t}\icap,\xcap,\icap,\Atcap e,t)
    +\partial_\icap \gcap(\xcap,\icap,\Atcap e,t) \dxdy{}{t}\icap \\
    &{}
    + \partial_\vcap \gcap(\xcap,\icap,\Atcap e,t)
    \Atcap f_7(\xcap,\icap,\xres,\ires,\xind,\iind,e,t)
    + \partial_t \gcap(\xcap,\icap,\Atcap e,t)\\
    =&\ \strongfcap(\dxdy{}{t}\icap,\xcap,\icap,\Atcap e,t) \\
    &{}
    + \partial_\vcap \gcap(\xcap,\icap,\Atcap e,t)
    \Atcap f_7(\xcap,\icap,\xres,\ires,\xind,\iind,e,t)
    + \partial_t \gcap(\xcap,\icap,\Atcap e,t).
  \end{align*}
  Using \eqref{eq:icap.ivsrc.index2}, we can split
  \begin{align*}
    \dxdy{}{t}\icap &= 
    \begin{bmatrix}
      I&0
    \end{bmatrix}
    \begin{bmatrix}
      \dxdy{}{t}\icap \\[0.5ex] \dxdy{}{t}\ivsrc
    \end{bmatrix}
    =
    \begin{bmatrix}
      I&0
    \end{bmatrix}
    \QCVloop
    \begin{bmatrix}
      \dxdy{}{t}\icap \\[0.5ex] \dxdy{}{t}\ivsrc
    \end{bmatrix}
    +
    \begin{bmatrix}
      I&0
    \end{bmatrix}
    \PCVloop
    \begin{bmatrix}
      \dxdy{}{t}\icap \\[0.5ex] \dxdy{}{t}\ivsrc
    \end{bmatrix}\\
    &=
    \begin{bmatrix}
      I&0
    \end{bmatrix}
    \QCVloop
    \begin{bmatrix}
      \dxdy{}{t}\icap \\[0.5ex] \dxdy{}{t}\ivsrc
    \end{bmatrix}
    +
    \begin{bmatrix}
      I&0
    \end{bmatrix}
    f_8(\xcap,\icap,\xres,\ires,\xind,\iind,e,t).
  \end{align*}   
  Since the circuit has no V-loop, the matrix $\Avsrc$ has full column rank, see
  Corollary \ref{cor:IcutsetVloop}. It implies
  \begin{align*}
    \ker \begin{bmatrix} I&0 \end{bmatrix} \QCVloop
    = \ker \begin{bmatrix} I&0\\ \Acap & \Avsrc \end{bmatrix} \QCVloop
    = \ker \QCVloop.
  \end{align*}
  Rewriting \eqref{eq:vcapvvsrc.index2} as
  \begin{align*}
    \QtCVloop
    \begin{bmatrix}
      I\\0
    \end{bmatrix}
    \dxdy{}{t}\gcap(\xcap,\icap,\Atcap e,t)
    +
    \QtCVloop
    \begin{bmatrix}
      0\\I
    \end{bmatrix}
      \dxdy{^2}{t^2}\vsrc
    =0.
  \end{align*}
  allows us to apply Lemma \ref{lem:solutionform.monotone} with
  \begin{align*}
    M:=\begin{bmatrix} I&0 \end{bmatrix}\QCVloop,\
    P:= \QCVloop,\
    z:=\begin{bmatrix}
      \dxdy{}{t}\icap \\[0.5ex] \dxdy{}{t}\ivsrc
    \end{bmatrix},\
    y:=(\xcap,\icap,\xres,\ires,\xind,\iind,e,t)
  \end{align*}    
  and
  \begin{align*}      
    f(x,y):=&\
    \strongfcap(x+\begin{bmatrix} I&0 \end{bmatrix}
    f_8(\xcap,\icap,\xres,\ires,\xind,\iind,e,t),\xcap,\icap,\Atcap e,t) \\
    &{}
    + \partial_\vcap \gcap(\xcap,\icap,\Atcap e,t)
    \Atcap f_7(\xcap,\icap,\xres,\ires,\xind,\iind,e,t)
    + \partial_t \gcap(\xcap,\icap,\Atcap e,t),\\
    r(y):=&  \begin{bmatrix} 0\\I \end{bmatrix}
    \dxdy{^2}{t^2}\vsrc(t).
  \end{align*}
  It means that we find a continuous function $f_9$ such that
  \begin{align*}
    \QCVloop
    \begin{bmatrix}
      \dxdy{}{t}\icap \\[0.5ex] \dxdy{}{t}\ivsrc
    \end{bmatrix}
    = f_9(\xcap,\icap,\xres,\ires,\xind,\iind,e,t).
  \end{align*}
  Combining it with \eqref{eq:icap.ivsrc.index2} we get
  \begin{align}
    \label{eq:icap.ivsrc.index2.final} 
    \begin{bmatrix}
      \dxdy{}{t}\icap \\[0.5ex] \dxdy{}{t}\ivsrc
    \end{bmatrix}
    = f_8(\xcap,\icap,\xres,\ires,\xind,\iind,e,t)
    + f_9(\xcap,\icap,\xres,\ires,\xind,\iind,e,t).
  \end{align}
  Finally, we obtain from \eqref{eq:ind.xf} and \eqref{eq:e.index2} that
  \begin{align}
    \label{eq:xind.index2}
    \dxdy{}{t}\xind &= \xfind(\Atind f_7(\xcap,\icap,\xres,\ires,\xind,\iind,e,t),\xind,\iind,\vind,t)
  \end{align}
  and from \eqref{eq:cap.xf} and \eqref{eq:icap.ivsrc.index2.final} that
  \begin{align}
    \label{eq:xcap.index2}
    \dxdy{}{t}\xcap &= \xfcap([I\,0](f_8+f_9)(\xcap,\icap,\xres,\ires,\xind,\iind,e,t),
      \xcap,\icap,\vcap,t).
  \end{align}
  Consequently, the equations \eqref{eq:e.index2}, \eqref{eq:iind},
  \eqref{eq:icap.ivsrc.index2.final} and
  \eqref{eq:xind.index2}, \eqref{eq:xcap.index2}, \eqref{eq:res.xf} represent an explicit
  ordinary differential equation system. That means the differentiation index of the
  circuit system \eqref{eq:MNAequations} is at most 2. \qed
\end{proof}

Theorems~\ref{th:index1} and \ref{th:index2} contain the results of \cite{Estevez-Schwarz_2000aa} in the case of circuits that only contain simple lumped elements in either traditional, i.e., Prop.~\ref{prop:classicind}, \ref{prop:classicres} and \ref{prop:classiccap}, or flux/charge formulation, i.e. Prop.~\ref{prop:fluxind} and
\ref{prop:chargecap}. Some minor differences arise due to Remark~\ref{rem:extra_currents}, e.g., loops of capacitances lead to index-2 systems since the corresponding current $i_C$ is not eliminated from the system \eqref{eq:MNAequations}. Similarly, results for many refined models, for example when considering \cite{Tsukerman_2002aa,Bartel_2011aa,Cortes-Garcia_2019aa} as inductance-like elements, are included in Theorems~\ref{th:index1} and \ref{th:index2}. The next section discusses a few challenging examples.

\section{Refined models}\label{sec:refined}
We present examples for refined models based on PDEs describing electromagnetic fields, that
are coupled to the circuit system of DAEs and can be categorized with the generalized elements 
of Section~\ref{sec:generalized_circuit_elements}.

All models appearing in this section arise from \index{Maxwell's equations} Maxwell's equations \cite{Maxwell_1864aa,Jackson_1998aa}.
Those can be written in differential form for a system at rest as
\begin{subequations}\label{eq:maxwell}
	\begin{align}
	\nabla\times\Efield &= -{\partial_t}\Bfield\;,\label{eq:afaraday_lenz_diff}\\
	\nabla\times\Hfield &= {\partial_t}{\Dfield} +\Jfield\;,\label{eq:ampere_maxwell_diff}\\
	\nabla\cdot\Dfield  &= \rhofield\;,\label{eq:gauss_diff} \\
	\nabla\cdot\Bfield  &= 0\;,\label{eq:no_mag_mono_diff}
	\end{align}
\end{subequations}
where 
$\Efield$ is the electric field strength, 
$\Bfield$ the magnetic flux density,
$\Hfield$ the magnetic field strength,
$\Dfield$ the electric flux density and
$\Jfield$ the electric current density. All these quantities are vector fields $\Omega\times\mathcal{I}\rightarrow\setR^3$
defined  in a domain $\Omega\subset\setR^3$ and time interval $\mathcal{I}\subset \setR$. The electric charge
density $\rhofield$ is a scalar field $\Omega\times\mathcal{I}\rightarrow\setR$. 

The field quantities are related to each other through the material equations
\begin{align}\label{eq:material1}
	\Dfield = \varepsilon\Efield\;, && \Jfield_{\scalebox{0.5}{$\mathrm{c}$}} = \sigma \Efield\;, && \Hfield = \mu \Bfield\;,
\end{align}
where 
$\varepsilon$ is the electric permittivity, 
$\sigma$ the electric conductivity and
$\mu$ the magnetic permeability. They are rank-2 tensor fields $\Omega\rightarrow\setR^{3\times3}$. The current density 
in \eqref{eq:ampere_maxwell_diff} can be divided into the conduction current density $\Jfield_{\scalebox{0.5}{$\mathrm{c}$}}$ of 
\eqref{eq:material1} and the source current density $\Jfield_{\scalebox{0.5}{$\mathrm{s}$}}$
\begin{equation}
\Jfield = \Jfield_{\scalebox{0.5}{$\mathrm{c}$}} + \Jfield_{\scalebox{0.5}{$\mathrm{s}$}}\;.
\end{equation}
The inverse of the material relations in \eqref{eq:material1} is defined through the 
electric resistivity 
$\rho:\Omega\rightarrow\setR^{3\times3}$ and the magnetic reluctivity $\nu:\Omega\rightarrow\setR^{3\times3}$ such that
\begin{align}\label{eq:material2}
\Efield = \rho \Jfield_{\scalebox{0.5}{$\mathrm{c}$}} \;, && \Bfield = \nu \Hfield\;.
\end{align}

\begin{assumption}[\cite{Cortes-Garcia_2018aa}]\label{ass:materials}
	We divide the space domain $\Omega$ into three disjoint subdomains $\Omega_{\scalebox{0.5}{$\mathrm{c}$}}$ (the conducting domain),
	$\Omega_{\scalebox{0.5}{$\mathrm{s}$}}$ (the source domain) and $\Omega_{\scalebox{0.5}{0}}$ (the excitation-free domain) such that
	\begin{itemize}
		\item the material tensors $\varepsilon$, $ \mu$ and $\nu$ are positive definite on the whole subdomain $\Omega$.
		\item the material tensors $\rho$ and $\sigma$ are positive definite in  $\Omega_{\scalebox{0.5}{$\mathrm{c}$}}$ and zero everywhere else.
		\item the source current density is only nonzero in $\Omega_{\scalebox{0.5}{$\mathrm{s}$}}$.
	\end{itemize}
\end{assumption}

In order to simulate Maxwell's equations and its approximations, often potentials are defined, that allow to rewrite the equations as
systems of PDEs that can be resolved. For the examples that are presented next, the magnetic vector potential 
$\Afield:\Omega\times\mathcal{I}\rightarrow\setR^3$ and the electric scalar potential $\phi:\Omega\times\mathcal{I}\rightarrow\setR$
are relevant. They are defined such that
\begin{align}\label{eq:potentials}
	\Bfield = \nabla\times\Afield && \text{and} && \Efield = -\partial_t\Afield - \nabla\phi\;.
\end{align}

Following the \textit{finite integration technique} (FIT), originally introduced in 1977 by Thomas Weiland 
\cite{Weiland_1977aa}, the discrete version of \eqref{eq:maxwell} is obtained as Maxwell's grid equations \cite{Schuhmann_2001aa}
\begin{equation}
	\curlfit e = -\dxdy{}{t}b \qquad \curldfit h = \dxdy{}{t}d + j \qquad 
	\divdfit d = q \qquad \divfit b = 0\;,
\end{equation}
here $\curlfit$, $\curldfit = \curlfit^{\top}$(see \cite{Schuhmann_2001aa}) and $\divfit$, $\divdfit$ are the discrete curl, dual curl, divergence 
and dual divergence operators, respectively. The discrete field vectors $e$, $b$, $h$, $d$, $j$ and $q$ are 
integrated quantities over points, edges, facets and volumes of two dual grids.  
Also, the material relations \eqref{eq:material1} and \eqref{eq:material2} can be formulated through the material matrices $M_{\star}$ as
\begin{equation}
	d = \Meps e \qquad j_{\mathrm{c}} = \Msigma e \qquad h = \Mmu b \qquad e = M_{\rho} j_{\mathrm{c}} \qquad b = \Mnu h\;.
\end{equation}
Analogous to the continuous case, discrete potentials can be defined, which lead to the relation
\begin{align}\label{eq:descr_def}
b = \curlfit a \qquad e = -\dxdy{}{t}a - G \bar{\Phi}\;,
\end{align}
where $a$ and $\bar{\Phi}$ are the discrete magnetic vector potential and electric scalar potential, respectively and 
$G=-\divdfit^{\top}$ (see \cite{Schuhmann_2001aa}) is the discrete gradient operator.

\begin{assumption}\label{ass:boundary}
	The boundary of the domain $\Gamma=\partial\Omega$ is divided into three disjoint sets $\Gamma_{\scalebox{0.5}{$\mathrm{neu}$,0}}$, $\Gamma_{\scalebox{0.5}{$\mathrm{dir}$,0}}$ 
	and $\Gamma_{\scalebox{0.5}{$\mathrm{s}$}}$, with
	$$\Gamma = \Gamma_{\scalebox{0.5}{$\mathrm{neu}$,0}}\cup\Gamma_{\scalebox{0.5}{$\mathrm{dir}$,0}}\cup\Gamma_{\scalebox{0.5}{$\mathrm{s}$}}\;.$$
	Here, $\Gamma_{\scalebox{0.5}{$\mathrm{neu}$,0}}$ and $\Gamma_{\scalebox{0.5}{$\mathrm{dir}$,0}}$ 
	are the parts where homogeneous
	Neumann and Dirichlet boundary conditions are imposed and $\Gamma_{\scalebox{0.5}{$\mathrm{s}$}}$ where 
	the field equation is excited. 
\end{assumption}
In case of a device described by Maxwell's equations and coupled to a circuit through boundary conditions, $\Gamma_{\scalebox{0.5}{$\mathrm{s}$}}$ 
represents the area where the device is connected to the surrounding network.

\begin{assumption}\label{ass:bcinmatrices}
	We assume that at least the homogeneous Dirichlet boundary conditions of $\Gamma_{\scalebox{0.5}{$\mathrm{dir}$,0}}$ are 
	already incorporated into the discrete operator matrices, such that the gradient
	operator matrix $G = -\divdfit^{\top}$ has full column rank.
\end{assumption}
This is a standard assumption and has already been shown and used e.g. in \cite{Baumanns_2012ab,Cortes-Garcia_2018aa}.

\begin{remark}
	Both material as well as operator matrices with similar properties are also obtained with a finite element (FE) discretization
	of the partial differential equations obtained from Maxwell's equations, whenever appropriate basis and test
	functions are used, that fulfil the discrete de Rham sequence \cite{Bossavit_1998aa,Cortes-Garcia_2019aa}. Therefore, the subsequent analysis of
	the discretized systems is also valid for FE discretizations.
\end{remark}

\newcommand{\Qs}{\ensuremath{Q_{\scalebox{0.5}{$\mathrm{s}$}}}}
\newcommand{\Ys}{\ensuremath{Y_{\scalebox{0.5}{$\mathrm{s}$}}}}

\subsection{Inductance-like element}
\label{sec:ind_like_full_ME}
In the following we give an example of an electromagnetic (EM) device, with its formulation taken from \cite{Baumanns_2013aa}, based upon full wave Maxwell's equation, 
that fits the form of a strong inductance-like element.

In the absence of source terms and Neumann boundary conditions, i.e.,\newline 
\( \Omega_{\scalebox{0.5}{$\mathrm{s}$}}, \Gamma_{\scalebox{0.5}{$\mathrm{neu}$,0}} = \emptyset \), one possibility to rewrite
 Maxwell's equations in terms of potentials is given by the following second order PDE system (see \cite{Baumanns_2012ab})
\begin{subequations}\label{eq:pot_form}
	\begin{align}
	{\varepsilon} \nabla {\partial_t} \varphi + {\zeta} \nabla \left[{\xi}\nabla \cdot \left({\zeta}\vec{A} \right) \right]&= 0 && \text{ in }\Omega\;,\label{eq:pot_gauge}\\
	\nabla\times ({\nu} \nabla\times\vec{A})
	+{\partial_t}\left[{\varepsilon}\left(\nabla\varphi+{\partial_t}\vec{A}\right)\right] + {\sigma} \left(\nabla \varphi + {\partial_t}\vec{A}\right) &= 0 && \text{ in }\Omega\;,
	\label{eq:pot_MA}
	\end{align}
\end{subequations}
where \(\zeta \) and \(\xi \) are artificial material tensors whose choice is discussed for example in \cite{clemens2002regularization} and \cite{clemens2005large}.
We refer to system \eqref{eq:pot_form} as the \( \vec{A}- \varphi \) \textit{formulation} which makes use of a \textit{grad-type Lorenz gauge condition} in order to avoid ambiguity of the potentials, see \cite{Baumanns_2012ab} \cite{clemens2002regularization}.
Let  \( \vind \) and \( \iind \) be the time-dependent branch voltages and currents of the element, respectively. 
With Assumption~\ref{ass:boundary} given, we complete \eqref{eq:pot_form} with the boundary conditions
\begin{subequations}\label{eq:pot_bc}
	\begin{align}
	\nabla\times \Afield &= 0 &&  \text{ in }\Gamma_{\scalebox{0.5}{$\mathrm{dir}$,0}}\;,\label{eq:pot_bc.a}\\
	\phi &= 0 &&  \text{ in }\Gamma_{\scalebox{0.5}{$\mathrm{dir}$,0}}\;,\label{eq:pot_bc.b}\\
	\phi &= \vind &&  \text{ in }\Gamma_{\scalebox{0.5}{$\mathrm{s}$}}\;.\label{eq:pot_bc.c}
	\end{align}
\end{subequations}
The branch currents \( \iind \) shall comply with the model
\begin{align}\label{eq:pot_cce}
\int_{\Gamma_{\scalebox{0.5}{$\mathrm{s}$}}}  \nabla \times \left( \nu \nabla \times \Afield \right) \cdot\mathrm{d}\vec{S} = \iind\;.
\end{align}

In order to apply the method of lines, we spatially discretize the system
 \eqref{eq:pot_form} using e.g. the finite integration technique.
Since most of the required matrices and quantities were already introduced in this section's 
preliminaries, we proceed with the circuit coupling which is archived via the boundaries only 
(\( \Omega_{\scalebox{0.5}{$\mathrm{s}$}} = \emptyset \)).

Given Assumption~\ref{ass:bcinmatrices}, the homogeneous Dirichlet boundaries \eqref{eq:pot_bc.a} and 
\eqref{eq:pot_bc.b} are already incorporated into the discrete operator matrices, e.g. \( \gradfit \) or 
\( \curldfit \). 
To incorporate the inhomogeneous Dirichlet boundary conditions, we split \( \bar{\Phi} \) into \( \Phi_{\scalebox{0.5}{$\mathrm{s}$}} \) and \( \Phi \), belonging to the degrees of freedom in \( \Gamma_{\scalebox{0.5}{$\mathrm{s}$}} \) and the rest, as follows
\begin{align}\label{eq:phi_split}
	\bar{\Phi} = {Q}_{\scalebox{0.5}{$\mathrm{s}$}}\Phi + {P}_{\scalebox{0.5}{$\mathrm{s}$}} \Phi_{\scalebox{0.5}{$\mathrm{s}$}}\;, 
\end{align}
with basis matrices \( {Q}_{\scalebox{0.5}{$\mathrm{s}$}} \) and \( {P}_{\scalebox{0.5}{$\mathrm{s}$}} \) of full column rank.
The boundary voltage excitation \eqref{eq:pot_bc.c} is then obtained by setting 
\( \Phi_{\scalebox{0.5}{$\mathrm{s}$}} = \Lambda_{\scalebox{0.5}{$\mathrm{s}$}}\vind \) 
with the element's terminal to $\Gamma_{\scalebox{0.5}{$\mathrm{s}$}}^{(j)}$'s degrees of freedom mapping
\begin{align*}
(\Lambda_{\scalebox{0.5}{$\mathrm{s}$}})_{ij}=
\begin{cases}
1,\quad \text{ if }(\Phi_{\scalebox{0.5}{$\mathrm{s}$}})_i \text{ belongs to the $j$-th terminal } \Gamma_{\scalebox{0.5}{$\mathrm{s}$}}^{(j)}\\
0,\quad \text{ otherwise.}
\end{cases}
\end{align*}
Here, $\Gamma_{\scalebox{0.5}{$\mathrm{s}$}} = \Gamma_{\scalebox{0.5}{$\mathrm{s}$}}^{(1)}\cup \ldots\cup\Gamma_{\scalebox{0.5}{$\mathrm{s}$}}^{(k)}$, for a $k$-port device, where
$$\Gamma_{\scalebox{0.5}{$\mathrm{s}$}}^{i}\cap\Gamma_{\scalebox{0.5}{$\mathrm{s}$}}^{j}=\emptyset,\; \text{ for }i\neq j\;.$$
With the junction \( Y_{\scalebox{0.5}{$\mathrm{s}$}} = P_{\scalebox{0.5}{$\mathrm{s}$}}\Lambda_{\scalebox{0.5}{$\mathrm{s}$}} \) the discrete gradient in \eqref{eq:descr_def} reads:
\begin{align*}
	\gradfit \bar{\Phi} = \gradfit \Qs \Phi + \gradfit Y_{\scalebox{0.5}{$\mathrm{s}$}} \vind\;.
\end{align*}
\begin{remark}\label{rem:Ysfullcol}
	Note that, as the different terminals $\Gamma_{\scalebox{0.5}{$\mathrm{s}$}}^{(j)}$ are disjoint,
	 per construction, $\Lambda_{\scalebox{0.5}{$\mathrm{s}$}}$, and therefore also $Y_{\scalebox{0.5}{$\mathrm{s}$}}$,
	 have full column rank.
\end{remark}
The spatially discretized version of \eqref{eq:pot_form} with incorporated boundary conditions \eqref{eq:pot_bc} is then given by
\begin{align}
	\Qs^\top\divdfit \Meps \gradfit \Qs \dxdy{}{t} {\Phi } + \Qs^\top\divdfit M_\zeta \gradfit M_\xi \divdfit  M_\zeta a  
	&= 0\;,\label{eq:MGE_potential_1}\\
	\curldfit  \Mnu C a  + 
	\dxdy{}{t} \left[
	\Meps \left(
	\gradfit \Qs \Phi + \gradfit Y_{\scalebox{0.5}{$\mathrm{s}$}} \vind +  {\pi} 
	\right)
	\right]
	+
	\Msigma\left(
	\gradfit \Qs \Phi + \gradfit Y_{\scalebox{0.5}{$\mathrm{s}$}} \vind + 	\dxdy{}{t} a  
	\right)
	&= 0\;,\label{eq:MGE_potential_2}\\
	\dxdy{}{t} a -{\pi} &= 0\;,\label{eq:MGE_potential_3}
\end{align}
where \( \pi \) is a discrete quasi-canonical momentum introduced in order to avoid second order derivatives. 
The discretized current coupling model of \eqref{eq:pot_cce} reads
\begin{align}\label{eq:boundary_excitation_pot}
\iind =& \Ys^{\top}\divdfit \curldfit  \Mnu \curlfit a \;.
\end{align}
For \( \xind = (\Phi, a, \pi) \), we define the system matrices
\begin{align*}
M:=& \begin{bmatrix}
\Qs^\top\divdfit \Meps\gradfit\Qs & 0&0\\
\Meps\gradfit\Qs & \Msigma & \Meps \\
0&I&0
\end{bmatrix},&
A :=& \begin{bmatrix}
0 & \Qs^\top\divdfit M_\zeta\gradfit M_\xi \divdfit  M_\zeta&0\\
\Msigma\gradfit\Qs& \curldfit \Mnu\curlfit & 0 \\
0&0&-I
\end{bmatrix},\\
N :=& \begin{bmatrix}0\\\Meps \gradfit\Ys \\ 0\end{bmatrix},&
B :=& \begin{bmatrix}0\\\Msigma \gradfit\Ys \\ 0\end{bmatrix},\\
F :=& \begin{bmatrix}0 & \Ys^{\top}\divdfit \curldfit  \Mnu \curlfit & 0\end{bmatrix}
\end{align*}
from which we conclude the EM device's element description
\begin{align}\label{eq:em_device}
\find\left(\dxdy{}{t}\dfind(\xind, \iind, \vind, t) , \xind, \iind, \vind,t\right) := \begin{pmatrix}
M \dxdy{}{t} \xind +A\xind +B \vind +N \dxdy{}{t}\vind\\\iind -F\xind
\end{pmatrix}=0\;.
\end{align}

\begin{proposition}
		Provided Assumptions~\ref{ass:materials}, \ref{ass:boundary} and \ref{ass:bcinmatrices} are fulfilled and the absence 
		of inner sources and Neumann boundary conditions, the EM device, whose model is given by the element description 
		\eqref{eq:em_device}, is a strongly inductance-like element.
\end{proposition}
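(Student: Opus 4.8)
The plan is to verify the two requirements of Definition~\ref{def:ind-like} in turn: first, that at most one differentiation of \eqref{eq:em_device} produces the structural form \eqref{eq:ind.xf}--\eqref{eq:ind.g}, and second, that the resulting function $\strongfind$ from \eqref{eq:ind.strongf} is continuous and strongly monotone with respect to $\vpind$. The central linear-algebraic facts I would need are: under Assumptions~\ref{ass:materials} and \ref{ass:bcinmatrices}, the matrix $\gradfit$ has full column rank while $\Meps$, $\Msigma$, $\Mnu$ are positive definite on their respective subdomains; hence $\Qs^\top\divdfit\Meps\gradfit\Qs = (\gradfit\Qs)^\top\Meps(\gradfit\Qs)$ is symmetric positive definite (using $\divdfit=-\gradfit^\top$ up to sign conventions from the de Rham structure). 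This invertibility of the leading $(1,1)$-block of $M$ is what will let me solve for the derivatives.

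\textbf{Deriving the structural form.} First I would partition $\xind=(\Phi,a,\pi)$ and read off the three block rows of $M\,\dxdy{}{t}\xind + A\xind + B\vind + N\dxdy{}{t}\vind = 0$. The third row is already $\dxdy{}{t}a=\pi$, giving part of $\xfind$ with no differentiation. The first row, using invertibility of the $(1,1)$-block, solves directly for $\dxdy{}{t}\Phi$ as a function of $a$ (and hence of $\xind$); this contributes no $\vpind$-dependence and again needs no differentiation. The second row is the only place where $\vind$ and $\dxdy{}{t}\vind$ enter, through $B\vind+N\dxdy{}{t}\vind$; since $\Meps$ (the coefficient of $\dxdy{}{t}\pi$ in that row after substituting $\dxdy{}{t}a=\pi$) is positive definite and invertible, I can solve the second row for $\dxdy{}{t}\pi$ as an affine function of $\dxdy{}{t}\vind$, $\Phi$, $a$, $\pi$ and $\vind$. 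Collecting the solved expressions for $\dxdy{}{t}\Phi$, $\dxdy{}{t}a$, $\dxdy{}{t}\pi$ yields exactly \eqref{eq:ind.xf} with $\dxdy{}{t}\xind=\xfind(\dxdy{}{t}\vind,\xind,\iind,\vind,t)$, all without differentiating. For the current equation $\iind=F\xind$ I differentiate once to obtain $\dxdy{}{t}\iind = F\dxdy{}{t}\xind$, and substituting the just-derived $\xfind$ gives \eqref{eq:ind.g}, i.e.\ $\dxdy{}{t}\iind=\gind(\xind,\iind,\vind,t)$ (note the $\dxdy{}{t}\vind$-term drops because $F$ has zero in its $\pi$-block, so only the $\dxdy{}{t}\Phi$ and $\dxdy{}{t}a$ components survive, neither of which carries $\vpind$). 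Thus exactly one differentiation suffices.

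\textbf{Strong monotonicity.} With $\gind$ in hand I would compute $\strongfind$ via \eqref{eq:ind.strongf}. Since $\gind=F\,\xfind$ and $F$ is independent of the state, $\partial_\iind\gind=0$, so the second summand $\partial_\vind\gind\,\vpind$ contributes the $\vind$-sensitivity of $\gind$. The key observation is that $\vpind$ enters $\xfind$ only through the $\dxdy{}{t}\pi$-block, affinely via $\Meps^{-1}(\ldots - N\vpind - B\vind)$ type terms, and $F$ picks out the $a$-block which equals $\pi$; tracking the dependence carefully, the $\vpind$-linear part of $\strongfind$ reduces to a fixed matrix of the shape $\Ys^\top(\ldots)\gradfit\Ys$ composed with $\Meps^{-1}$, which I expect to be symmetric positive definite because $\Ys$ and $\gradfit$ have full column rank (Remark~\ref{rem:Ysfullcol}) and $\Meps$ is positive definite. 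Applying Lemma~\ref{lem:posdef.monotone} to this matrix, together with Definition~\ref{def:stronmonomxn}, then gives strong monotonicity in $\vpind$; continuity is immediate since all blocks are constant matrices. \textbf{The main obstacle} I anticipate is the bookkeeping in this last step: correctly propagating $\vpind$ through the solved $\xfind$ and confirming that, after composition with $F$ and $\partial_\vind\gind$, the net coefficient of $\vpind$ is precisely a positive-definite matrix rather than merely an invertible or nonnegative one—this hinges on the $\Ys$/$\gradfit$ full-rank structure combining correctly with the positive-definite $\Meps$, and is where an index-reducing cancellation could otherwise spoil definiteness.
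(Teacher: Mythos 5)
Your route coincides with the paper's: you invert $M$ block-row by block-row where the paper writes $M^{-1}$ explicitly and forms $\widetilde A=M^{-1}A$, $\widetilde B=M^{-1}B$, $\widetilde N=M^{-1}N$; you obtain $\xfind$ with no differentiation; you differentiate $\iind=F\xind$ exactly once; and you correctly observe that $\gind$ carries no $\vpind$ because $F$ only sees the $a$-block of $\dxdy{}{t}\xind$, which equals $\pi$. That entire structural part of the sketch is faithful to the paper's proof, as is your identification that the $\vpind$-dependence of $\strongfind$ enters through the first summand $\partial_{\xind}\gind\,\xfind$ rather than through $\partial_{\vind}\gind\,\vpind$ (which vanishes here, since $F\widetilde B=0$).

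The one step that would not go through as you justify it is the final positive-definiteness claim. Doing the bookkeeping you defer: the $\vpind$-coefficient in the $\pi$-row of $\xfind$ is $-\Meps^{-1}(\Meps\gradfit\Ys)=-\gradfit\Ys$, so the $\Meps^{-1}$ cancels against the $\Meps$ in $N$, and the net coefficient of $\vpind$ in $\strongfind$ is $F\widetilde A\widetilde N=-\Ys^{\top}\divdfit\curldfit\Mnu\curlfit\gradfit\Ys=\Ys^{\top}\gradfit^{\top}\curlfit^{\top}\Mnu\curlfit\gradfit\Ys$. This matrix contains no $\Meps$ at all; its definiteness rests on $\Mnu$ being positive definite (Assumption~\ref{ass:materials}) and, crucially, on $\curlfit\gradfit\Ys$ being injective. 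The latter is \emph{not} implied by full column rank of $\gradfit$ and $\Ys$: in the discrete de Rham structure the curl annihilates gradient fields, so one has to argue specifically that the columns of $\gradfit\Ys$, stemming from the terminal degrees of freedom on $\Gamma_{\mathrm{s}}$, are not sent to zero by $\curlfit$. The paper disposes of this point with ``positive definite by construction''; your proposed justification (rank of $\gradfit$ and $\Ys$ together with positive definiteness of $\Meps$) names the wrong material matrix and ignores the $\curlfit$ sitting in the middle. This is precisely the ``index-reducing cancellation'' you flagged as the main obstacle, and it is indeed where the remaining work lies; everything else in the proposal is sound.
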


\begin{proof}
The discrete gradient operator \( \gradfit \) and basis matrix \( \Qs \) have full column rank by 
Assumption \ref{ass:bcinmatrices} and construction \eqref{eq:phi_split}. 
Further it is \( \gradfit =  -\divdfit^\top \) and together with \( \Meps \) being positive definite, as of Assumption~\ref{ass:materials},
 we deduce that the Laplace-operator \( L_{\scalebox{0.5}{$\mathrm{Q}$}} := \Qs^\top \divdfit  \Meps \gradfit \Qs \) is non-singular. Hence, we find
\begin{align*}
	M^{-1} = \begin{bmatrix}
		L_{\scalebox{0.5}{$\mathrm{Q}$}}^{-1} & 0 & 0\\
		0 & 0 & I\\
		-\gradfit \Qs L_{\scalebox{0.5}{$\mathrm{Q}$}}^{-1} & \Meps^{-1} & -\Meps^{-1} \Msigma
	\end{bmatrix}.
\end{align*}
Therefore, we can define the following matrices 
\begin{align*}
	\widetilde A &:=M^{-1} A = \begin{bmatrix}
		0 & L_{\scalebox{0.5}{$\mathrm{Q}$}}^{-1}H & 0\\
		0 & 0 & -I\\
		\Meps^{-1} \Msigma\gradfit\Qs & -\gradfit \Qs L^{-1}H + \Meps^{-1} \curldfit \Mnu\curlfit & \Meps^{-1} \Msigma
	\end{bmatrix},\\
	\widetilde B &:=M^{-1}B = \begin{bmatrix}
		0\\
		0\\
		\Meps^{-1} \Msigma \gradfit \Ys
	\end{bmatrix}, \quad
	\widetilde N :=M^{-1}N = \begin{bmatrix}
		0\\
		0\\
		\gradfit \Ys
	\end{bmatrix}
\end{align*}
with \(H := \Qs^\top\divdfit M_\zeta\gradfit M_\xi\divdfit  M_\zeta \) and deduce from \eqref{eq:em_device} a description for \( \dxdy{}{t} \xind \) of the form \eqref{eq:ind.xf}
\begin{align}\label{eq:em-device.xind}
	\dxdy{}{t} \xind = -\widetilde A \xind - \widetilde B \vind - \widetilde N \dxdy{}{t} \vind =: \xfind(\dxdy{}{t}\vind,\xind,\iind,\vind,t)\;.
\end{align}
Next, we differentiate \eqref{eq:em_device} once, in particular the second part, and insert the expression for \( \dxdy{}{t}\xind \) from \eqref{eq:em-device.xind} yielding
\begin{align*} 
	\dxdy{}{t}\iind = F (-\widetilde A \xind - \widetilde B \vind - \widetilde N \dxdy{}{t} \vind) = -F\widetilde A \xind - F\widetilde B \vind =: \gind(\xind,\iind,\vind,t).
\end{align*}
Thus, we found an expression of \( \dxdy{}{t}\iind \) fitting \eqref{eq:ind.g}.
Finally, we observe that
\begin{align*}
	\strongfind(\vpind,\xind,\iind,\vind,t):=&\
	\partial_\xind \gind(\xind,\iind,\vind,t)\xfind(\vpind,\xind,\iind,\vind,t)\nonumber\\
	& +\partial_\vind \gind(\xind,\iind,\vind,t) \vpind\\
	=&\
	F \widetilde{A}  \widetilde A \xind + F \widetilde{A} \widetilde B \vind + F \widetilde{A} \widetilde N  \vpind - \underbrace{F\widetilde B}_{=0} \vpind
\end{align*}
is continuous and strongly monotone with respect to $\vpind$, see Lemma\  \ref{lem:posdef.monotone} using that \( F \widetilde{A} \widetilde{N} = -\Ys^{\top}\divdfit \curldfit \Mnu \curlfit \gradfit \Ys = \Ys^{\top}\gradfit^\top \curlfit^\top  \Mnu \curlfit \gradfit \Ys \) is positive definite by construction.
We conclude that this model for an EM device fulfills the strongly inductance-like property. \qed
\end{proof}

\begin{remark}\label{rmk:FMinvN}
	The fact that \( FM^{-1}N \) vanishes, as obtained by elemental matrix operations, plays a key role in the EM device's model fitting the inductance-like element description.
\end{remark}

For two different field approximations of Maxwell's equations that result in strongly inductance-like elements, see \cite{Cortes-Garcia_2019aa}. In contrast to our example, there the
strongly inductance-like element is given by the term $\partial_\vind \gind(\xind,\iind,\vind,t) \vpind$ in \eqref{eq:ind.strongf}, like in the case 
of classical  and flux-formulated inductances.

\subsection{Capacitance-like element}
We consider the electroquasistatic field approximation of Maxwell's equations \cite{Clemens_2006aa,Cortes-Garcia_2018aa}. As in this approximation, 
the electric field $\Efield$ is rotation free, we can write it in terms of only the
electric scalar potential $\phi$ \cite{Cortes-Garcia_2018aa}. 

Given a time-dependent excitation $\vcap$, we can write the following boundary value problem to describe
an electroquasistatic field
\begin{subequations}\label{eq:eqs}
\begin{align}
	\nabla\cdot\sigma\nabla\phi + \dxdy{}{t}\nabla\cdot\varepsilon\nabla\phi &= 0 && \text{ in }\Omega\;,\label{eq:eqs1}\\
																		\phi &=0 &&  \text{ in }\Gamma_{\scalebox{0.5}{$\mathrm{dir}$,0}}\;,\\
									{\partial_{\vec{n}}}\phi	&=0  &&  \text{ in }\Gamma_{\scalebox{0.5}{$\mathrm{neu}$,0}}\;,\\
																		\phi &=\vcap &&  \text{ in }\Gamma_{\scalebox{0.5}{$\mathrm{s}$}}\;,
\end{align}
\end{subequations}
with $\vec{n}$ being the outer normal vector to $\Gamma_{\scalebox{0.5}{$\mathrm{neu}$,0}}$.
To couple the electroquasistatic system \eqref{eq:eqs} to a circuit, the extraction of a current is necessary, so as to obtain
an implicit voltage-to-current relation. For that we integrate the current density \eqref{eq:eqs1} over the boundary, where the connections
to the circuit are located ($\Gamma_{\scalebox{0.5}{$\mathrm{s}$}}$), i.e.
\begin{equation}\label{eq:eqscoupling}
	\int_{\Gamma_{\scalebox{0.5}{$\mathrm{s}$}}} \left(\nabla\cdot\sigma\nabla\phi + \dxdy{}{t}\nabla\cdot\varepsilon\nabla\phi\right)\cdot\mathrm{d}\vec{S} = \icap\;.
\end{equation}
We assume first a spatial discretization of the PDEs \eqref{eq:eqs1} and \eqref{eq:eqscoupling} has been applied,
with only the boundary conditions
\begin{align}\label{eq:bcdirneucap}
	\phi =0 \quad  \text{ in }\Gamma_{\scalebox{0.5}{$\mathrm{dir}$,0}} && \text{ and } && {\partial_{\vec{n}}}\phi	=0  \quad  \text{ in }\Gamma_{\scalebox{0.5}{$\mathrm{neu}$,0}}\;.
\end{align} 
Analogously to the previous examples and given the homogeneous boundary conditions of \eqref{eq:bcdirneucap} are incorporated in the operator matrices, i.e., Assumption~\ref{ass:bcinmatrices} holds,
the spatially discretized electroquasitatic
field equation with circuit coupling equation is obtained as  \cite{Cortes-Garcia_2018aa}
\begin{subequations}\label{eq:discreteeqs}
\begin{align}
	{Q}_{\scalebox{0.5}{$\mathrm{s}$}}^{\top}L_{\sigma}{Q}_{\scalebox{0.5}{$\mathrm{s}$}}\Phi + 
			{Q}_{\scalebox{0.5}{$\mathrm{s}$}}^{\top}L_{\varepsilon}{Q}_{\scalebox{0.5}{$\mathrm{s}$}}\dxdy{}{t}\Phi + 
			{Q}_{\scalebox{0.5}{$\mathrm{s}$}}^{\top}L_{\sigma}Y_{\scalebox{0.5}{$\mathrm{s}$}}\vcap 
			+ {Q}_{\scalebox{0.5}{$\mathrm{s}$}}^{\top}L_{\varepsilon}Y_{\scalebox{0.5}{$\mathrm{s}$}}\dxdy{}{t}\vcap &=0\label{eq:discreteeqs.phi}\;,\\
	Y_{\scalebox{0.5}{$\mathrm{s}$}}^{\top}L_{\sigma}{Q}_{\scalebox{0.5}{$\mathrm{s}$}}\Phi +Y_{\scalebox{0.5}{$\mathrm{s}$}}^{\top}L_{\varepsilon}{Q}_{\scalebox{0.5}{$\mathrm{s}$}}\dxdy{}{t}\Phi + 
			Y_{\scalebox{0.5}{$\mathrm{s}$}}^{\top}L_{\sigma}Y_{\scalebox{0.5}{$\mathrm{s}$}}\vcap + 
			Y_{\scalebox{0.5}{$\mathrm{s}$}}^{\top}L_{\varepsilon}Y_{\scalebox{0.5}{$\mathrm{s}$}}\dxdy{}{t}\vcap &=\icap\label{eq:discreteeqs.i}\;,		
\end{align}
\end{subequations}
where $L_{\sigma} = \divdfit\Msigma\divdfit^{\top}$ and $L_{\varepsilon} =\divdfit\Meps\divdfit^{\top}$ are two Laplace matrices.

\begin{proposition}\label{prop:QsandYs}
	For ${Q}_{\scalebox{0.5}{$\mathrm{s}$}}$  and $Y_{\scalebox{0.5}{$\mathrm{s}$}}$, we have that
	\begin{equation*}
		{Q}_{\scalebox{0.5}{$\mathrm{s}$}}x_1\neq Y_{\scalebox{0.5}{$\mathrm{s}$}}x_2, \text{ for } x_1,x_2\neq 0\;.
	\end{equation*}
\end{proposition}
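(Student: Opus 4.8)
The plan is to prove the slightly stronger statement that the column spaces of $\Qs$ and $\Ys$ intersect only in the origin, i.e. $\im\,\Qs\cap\im\,\Ys=\{0\}$, since the claim is an immediate consequence: if $\Qs x_1=\Ys x_2$ held for some $x_1,x_2$, then the common vector would lie in $\im\,\Qs\cap\im\,\Ys$ and hence vanish, so that the full column rank of $\Qs$ and of $\Ys$ (the latter by Remark~\ref{rem:Ysfullcol}) would force $x_1=0$ and $x_2=0$, contradicting $x_1,x_2\neq 0$. As a first reduction I would use $\Ys=P_{\scalebox{0.5}{$\mathrm{s}$}}\Lambda_{\scalebox{0.5}{$\mathrm{s}$}}$, which gives $\im\,\Ys\subseteq\im\,P_{\scalebox{0.5}{$\mathrm{s}$}}$; it therefore suffices to establish the sharper relation $\im\,\Qs\cap\im\,P_{\scalebox{0.5}{$\mathrm{s}$}}=\{0\}$ between the two blocks of the potential splitting \eqref{eq:phi_split}.

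The crux, and the step I expect to be the main obstacle, is to extract this trivial-intersection property from the construction of $\Qs$ and $P_{\scalebox{0.5}{$\mathrm{s}$}}$. Note that full column rank of $\Qs$ and of $P_{\scalebox{0.5}{$\mathrm{s}$}}$ \emph{individually} is not enough, since two full-column-rank matrices can perfectly well have overlapping images; the real content lies in the fact that $\Qs$ and $P_{\scalebox{0.5}{$\mathrm{s}$}}$ are the basis matrices of the splitting $\bar\Phi=\Qs\Phi+P_{\scalebox{0.5}{$\mathrm{s}$}}\Phi_{\scalebox{0.5}{$\mathrm{s}$}}$ into the degrees of freedom on $\Gamma_{\scalebox{0.5}{$\mathrm{s}$}}$, carried by $P_{\scalebox{0.5}{$\mathrm{s}$}}$, and the remaining ones, carried by $\Qs$. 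Because these two index sets are disjoint and together exhaust all potential degrees of freedom, the concatenated matrix $[\Qs\;P_{\scalebox{0.5}{$\mathrm{s}$}}]$ is essentially a permutation matrix and in particular has trivial kernel. I would make this precise by observing that the trivial kernel of $[\Qs\;P_{\scalebox{0.5}{$\mathrm{s}$}}]$ is exactly the uniqueness of the representation in \eqref{eq:phi_split}: $\Qs\Phi+P_{\scalebox{0.5}{$\mathrm{s}$}}\Phi_{\scalebox{0.5}{$\mathrm{s}$}}=0$ implies $\Phi=0$ and $\Phi_{\scalebox{0.5}{$\mathrm{s}$}}=0$.

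Finally I would close the loop. Any $w\in\im\,\Qs\cap\im\,P_{\scalebox{0.5}{$\mathrm{s}$}}$ can be written as $w=\Qs a=P_{\scalebox{0.5}{$\mathrm{s}$}}b$, hence $\Qs a+P_{\scalebox{0.5}{$\mathrm{s}$}}(-b)=0$, and the trivial kernel of $[\Qs\;P_{\scalebox{0.5}{$\mathrm{s}$}}]$ just established yields $a=0$, $b=0$, so $w=0$; this proves $\im\,\Qs\cap\im\,P_{\scalebox{0.5}{$\mathrm{s}$}}=\{0\}$ and therefore $\im\,\Qs\cap\im\,\Ys=\{0\}$. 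Combining this with the full column rank of $\Qs$, $P_{\scalebox{0.5}{$\mathrm{s}$}}$ and $\Lambda_{\scalebox{0.5}{$\mathrm{s}$}}$, an equality $\Qs x_1=\Ys x_2=P_{\scalebox{0.5}{$\mathrm{s}$}}(\Lambda_{\scalebox{0.5}{$\mathrm{s}$}}x_2)$ forces both sides to be zero and then $x_1=0$ and $x_2=0$, which contradicts $x_1,x_2\neq0$ and establishes the proposition.
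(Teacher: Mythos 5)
Your proposal is correct and follows essentially the same route as the paper's own proof: reduce via $Y_{\scalebox{0.5}{$\mathrm{s}$}}=P_{\scalebox{0.5}{$\mathrm{s}$}}\Lambda_{\scalebox{0.5}{$\mathrm{s}$}}$, observe that by construction of the splitting \eqref{eq:phi_split} the images of $Q_{\scalebox{0.5}{$\mathrm{s}$}}$ and $P_{\scalebox{0.5}{$\mathrm{s}$}}$ correspond to disjoint sets of degrees of freedom and hence intersect trivially, and then invoke the full column rank of both matrices. Your write-up is merely more explicit than the paper's (which states the disjointness of the images directly "by construction"), in particular in spelling out that the trivial kernel of $[Q_{\scalebox{0.5}{$\mathrm{s}$}}\;P_{\scalebox{0.5}{$\mathrm{s}$}}]$ is what encodes this disjointness.
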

\begin{proof}
	This property follows directly from the definition of both matrices. We have $Y_{\scalebox{0.5}{$\mathrm{s}$}}x_2 = {P}_{\scalebox{0.5}{$\mathrm{s}$}}y_2$
	and, by construction, the image of ${P}_{\scalebox{0.5}{$\mathrm{s}$}}$ are the discrete elements living in $\Gamma_{\scalebox{0.5}{$\mathrm{s}$}}$,
	while the image of ${Q}_{\scalebox{0.5}{$\mathrm{s}$}}$ are the rest. Also, by construction, both matrices have full column rank and thus 
	a trivial kernel. \qed
\end{proof}

\begin{proposition}
	Provided Assumptions~\ref{ass:materials}, \ref{ass:boundary} and \ref{ass:bcinmatrices} are fulfilled, then the semidiscrete eletroquasistatic system of equations with 
	circuit coupling equation \eqref{eq:discreteeqs} is a strongly capacitance-like element.
\end{proposition}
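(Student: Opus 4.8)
The plan is to take the internal variable to be $\xcap:=\Phi$ and to exhibit \eqref{eq:discreteeqs} as an instance of the implicit template of Definition~\ref{def:cap-like}. In \eqref{eq:discreteeqs.phi}--\eqref{eq:discreteeqs.i} the derivatives $\dxdy{}{t}\Phi$ and $\dxdy{}{t}\vcap$ occur only through the $L_{\varepsilon}$-terms and enter linearly, so the system is equivalent to
\begin{align*}
  \begin{bmatrix} \Qs^\top L_{\varepsilon}\Qs & \Qs^\top L_{\varepsilon}\Ys \\ \Ys^\top L_{\varepsilon}\Qs & \Ys^\top L_{\varepsilon}\Ys \end{bmatrix}
  \begin{bmatrix} \dxdy{}{t}\Phi \\[0.4ex] \dxdy{}{t}\vcap \end{bmatrix}
  =
  -\begin{bmatrix} \Qs^\top L_{\sigma}\Qs\,\Phi + \Qs^\top L_{\sigma}\Ys\,\vcap \\[0.4ex] \Ys^\top L_{\sigma}\Qs\,\Phi + \Ys^\top L_{\sigma}\Ys\,\vcap - \icap \end{bmatrix}.
\end{align*}
The coefficient matrix is exactly $\mathcal{A}_{\varepsilon}:=[\Qs\ \Ys]^\top L_{\varepsilon}[\Qs\ \Ys]$, and the crux of the whole argument is to show that $\mathcal{A}_{\varepsilon}$ is positive definite, hence invertible.

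To this end I would first note that $L_{\varepsilon}=\divdfit\Meps\divdfit^\top=\gradfit^\top\Meps\gradfit$ because $\gradfit=-\divdfit^\top$. Since $\gradfit$ has full column rank by Assumption~\ref{ass:bcinmatrices} and $\Meps$ is positive definite by Assumption~\ref{ass:materials}, one gets $x^\top L_{\varepsilon}x=(\gradfit x)^\top\Meps(\gradfit x)>0$ for every $x\neq 0$, so $L_{\varepsilon}$ is positive definite. Next I would show $[\Qs\ \Ys]$ has full column rank: from $\Qs x_1+\Ys x_2=0$ we obtain $\Qs x_1=\Ys(-x_2)$, and Proposition~\ref{prop:QsandYs} together with the injectivity of $\Qs$ and $\Ys$ forces $x_1=x_2=0$. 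Since $[\Qs\ \Ys]$ sends any nonzero vector $w$ to a nonzero vector, we get $w^\top\mathcal{A}_{\varepsilon}w=([\Qs\ \Ys]w)^\top L_{\varepsilon}([\Qs\ \Ys]w)>0$, so $\mathcal{A}_{\varepsilon}$ is positive definite.

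With $\mathcal{A}_{\varepsilon}$ invertible the block system determines $(\dxdy{}{t}\Phi,\dxdy{}{t}\vcap)$ uniquely and \emph{without any differentiation}, yielding $\dxdy{}{t}\vcap=\gcap(\xcap,\icap,\vcap,t)$ and $\dxdy{}{t}\xcap=\xfcap(\xcap,\icap,\vcap,t)$, both affine (hence continuous) in their arguments; here $\xfcap$ happens to be independent of $\ipcap=\dxdy{}{t}\icap$, which is permissible in \eqref{eq:cap.xf}. This settles the capacitance-like property. For the strong property I would evaluate \eqref{eq:cap.strongf}: since $\xfcap$ does not depend on $\ipcap$, the term $\partial_\xcap\gcap\,\xfcap$ is constant in $\ipcap$, whereas $\partial_\icap\gcap$ equals the lower-right block of $\mathcal{A}_{\varepsilon}^{-1}$, i.e. the inverse of the Schur complement $\Ys^\top L_{\varepsilon}\Ys-\Ys^\top L_{\varepsilon}\Qs(\Qs^\top L_{\varepsilon}\Qs)^{-1}\Qs^\top L_{\varepsilon}\Ys$. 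Being a principal block of the inverse of a positive definite matrix, this block is itself positive definite, so $\strongfcap(\ipcap,\cdot)=(\text{const})+\partial_\icap\gcap\,\ipcap$ is strongly monotone with respect to $\ipcap$ by Lemma~\ref{lem:posdef.monotone}. The only genuine obstacle is the positive definiteness of the coupled matrix $\mathcal{A}_{\varepsilon}$; once that is secured, the linear solve and the Schur-complement identity for $\partial_\icap\gcap$ are routine linear algebra.
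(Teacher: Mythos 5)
Your proof is correct, and it reaches the paper's conclusion through a genuinely different organization of the key linear-algebra step. The paper eliminates $\Phi$ first: it solves \eqref{eq:discreteeqs.phi} for $\dxdy{}{t}\Phi$ (using invertibility of $\Qs^\top L_{\varepsilon}\Qs$), substitutes into \eqref{eq:discreteeqs.i}, and is then left with proving positive definiteness of the Schur complement $C=\Ys^\top\bigl(L_{\varepsilon}-L_{\varepsilon}\Qs(\Qs^\top L_{\varepsilon}\Qs)^{-1}\Qs^\top L_{\varepsilon}\bigr)\Ys$ directly; it does so by factoring out $L_{\varepsilon}^{1/2}$, recognizing a symmetric projector in the middle (hence positive semidefiniteness), and then running a kernel argument through Proposition~\ref{prop:QsandYs} and the full column rank of $\Ys$ to exclude a nontrivial kernel. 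You instead prove positive definiteness of the full coupled matrix $\mathcal{A}_{\varepsilon}=[\Qs\ \Ys]^\top L_{\varepsilon}[\Qs\ \Ys]$ up front --- which needs exactly the same two ingredients, positive definiteness of $L_{\varepsilon}$ and injectivity of $[\Qs\ \Ys]$ (the latter again via Proposition~\ref{prop:QsandYs} plus the injectivity of $\Qs$ and $\Ys$ separately, to cover the cases the proposition's hypothesis $x_1,x_2\neq 0$ leaves open) --- and then obtain positive definiteness of $\partial_\icap\gcap=(\mathcal{A}_{\varepsilon}^{-1})_{22}=C^{-1}$ for free from the standard facts that inverses and Schur complements of positive definite matrices are positive definite. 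Your route avoids the square root of $L_{\varepsilon}$ and the explicit projector computation and makes transparent \emph{why} the relevant block is positive definite; the paper's route keeps the computation at the level of the reduced, port-sized matrix $C$, which is the object one would actually assemble. Both arguments correctly exploit that $\xfcap$ ends up independent of $\ipcap$, so strong monotonicity of $\strongfcap$ with respect to $\ipcap$ reduces, via Lemma~\ref{lem:posdef.monotone}, to positive definiteness of the constant matrix $\partial_\icap\gcap$. (Incidentally, your identification $\partial_\icap\gcap=C^{-1}$ is the precise statement; the paper's final line writes $\partial_\icap\gcap=C$, a harmless slip since $C$ and $C^{-1}$ are simultaneously positive definite.)
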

\begin{proof}
	Due to  Assumptions~\ref{ass:bcinmatrices} and \ref{ass:materials}, and the fact that ${Q}_{\scalebox{0.5}{$\mathrm{s}$}}$ has full column rank, we start by rewriting \eqref{eq:discreteeqs.phi} as
	\begin{align}
		\dxdy{}{t}\Phi ={}& - (Q_{\scalebox{0.5}{$\mathrm{s}$}}^{\top}
									L_{\varepsilon}{Q}_{\scalebox{0.5}{$\mathrm{s}$}})^{-1}
								Q_{\scalebox{0.5}{$\mathrm{s}$}}^{\top}L_{\sigma} {Q}_{\scalebox{0.5}{$\mathrm{s}$}}\Phi\nonumber\\
						& -  ({Q}_{\scalebox{0.5}{$\mathrm{s}$}}^{\top}L_{\varepsilon}{Q}_{\scalebox{0.5}{$\mathrm{s}$}})^{-1}
								\left({Q}_{\scalebox{0.5}{$\mathrm{s}$}}^{\top}L_{\varepsilon}{Y}_{\scalebox{0.5}{$\mathrm{s}$}}\dxdy{}{t}\vcap
						 +{Q}_{\scalebox{0.5}{$\mathrm{s}$}}^{\top}L_{\sigma}{Y}_{\scalebox{0.5}{$\mathrm{s}$}}\vcap\right)\;.\label{eq:eqsdtphi}
	\end{align}
	Inserting this into \eqref{eq:discreteeqs.i} yields
	\begin{align}
		\icap = {}& Y_{\scalebox{0.5}{$\mathrm{s}$}}^{\top}
					\left(I -L_{\varepsilon}{Q}_{\scalebox{0.5}{$\mathrm{s}$}}
					({Q}_{\scalebox{0.5}{$\mathrm{s}$}}^{\top}L_{\varepsilon}{Q}_{\scalebox{0.5}{$\mathrm{s}$}})^{-1}
					Q_{\scalebox{0.5}{$\mathrm{s}$}}^{\top}\right)L_{\sigma}{Q}_{\scalebox{0.5}{$\mathrm{s}$}}\Phi\nonumber\\
		 & + Y_{\scalebox{0.5}{$\mathrm{s}$}}^{\top}\left(I- 
		 	L_{\varepsilon}{Q}_{\scalebox{0.5}{$\mathrm{s}$}}({Q}_{\scalebox{0.5}{$\mathrm{s}$}}^{\top}L_{\varepsilon}{Q}_{\scalebox{0.5}{$\mathrm{s}$}})^{-1}
		 				Q_{\scalebox{0.5}{$\mathrm{s}$}}^{\top}\right)L_{\sigma}{Y}_{\scalebox{0.5}{$\mathrm{s}$}}\vcap\nonumber\\
		 & + Y_{\scalebox{0.5}{$\mathrm{s}$}}^{\top}\left(L_{\varepsilon}-L_{\varepsilon}{Q}_{\scalebox{0.5}{$\mathrm{s}$}}
		 		({Q}_{\scalebox{0.5}{$\mathrm{s}$}}^{\top}L_{\varepsilon}{Q}_{\scalebox{0.5}{$\mathrm{s}$}})^{-1}
		 				Q_{\scalebox{0.5}{$\mathrm{s}$}}^{\top}L_{\varepsilon}\right){Y}_{\scalebox{0.5}{$\mathrm{s}$}}\dxdy{}{t}\vcap\;.\label{eq:eqsicapf}
	\end{align}
	Now we want to see that $C = Y_{\scalebox{0.5}{$\mathrm{s}$}}^{\top}\left(L_{\varepsilon}- 
	L_{\varepsilon}{Q}_{\scalebox{0.5}{$\mathrm{s}$}}({Q}_{\scalebox{0.5}{$\mathrm{s}$}}^{\top}L_{\varepsilon}{Q}_{\scalebox{0.5}{$\mathrm{s}$}})^{-1}
	Q_{\scalebox{0.5}{$\mathrm{s}$}}^{\top}L_{\varepsilon}\right){Y}_{\scalebox{0.5}{$\mathrm{s}$}}$ is positive definite.
	For that, using again that $L_{\varepsilon}$ is symmetric positive definite (Assumptions~\ref{ass:materials}~and~\ref{ass:bcinmatrices}) and thus its square root 
	exists and is also symmetric positive definite, we rewrite
	\begin{equation*}
		C = Y_{\mathrm{s}}^{\top}L_{\varepsilon}^{\frac{1}{2}}\left(I- 
		L_{\varepsilon}^{\frac{1}{2}}{Q}_{\scalebox{0.5}{$\mathrm{s}$}}({Q}_{\scalebox{0.5}{$\mathrm{s}$}}^{\top}L_{\varepsilon}{Q}_{\scalebox{0.5}{$\mathrm{s}$}})^{-1}
		Q_{\scalebox{0.5}{$\mathrm{s}$}}^{\top}L_{\varepsilon}^{\frac{1}{2}}\right)
		L_{\varepsilon}^{\frac{1}{2}}{Y}_{\scalebox{0.5}{$\mathrm{s}$}}.
	\end{equation*}
	It can easily be seen that $\left(I- 
	L_{\varepsilon}^{\frac{1}{2}}{Q}_{\scalebox{0.5}{$\mathrm{s}$}}({Q}_{\scalebox{0.5}{$\mathrm{s}$}}^{\top}
	L_{\varepsilon}{Q}_{\scalebox{0.5}{$\mathrm{s}$}})^{-1}Q_{\scalebox{0.5}{$\mathrm{s}$}}^{\top}L_{\varepsilon}^{\frac{1}{2}}\right)$
	is a symmetric projector and thus positive semidefinite. Therefore we have that $C$ is positive semidefinite. Let's assume that there exists a vector $x$ such that 
	$x^{\top}Cx=0$, then, 
	$$\left(I- 
	L_{\varepsilon}^{\frac{1}{2}}{Q}_{\scalebox{0.5}{$\mathrm{s}$}}({Q}_{\scalebox{0.5}{$\mathrm{s}$}}^{\top}L_{\varepsilon}{Q}_{\scalebox{0.5}{$\mathrm{s}$}})^{-1}
	Q_{\scalebox{0.5}{$\mathrm{s}$}}^{\top}L_{\varepsilon}^{\frac{1}{2}}\right)	L_{\varepsilon}^{\frac{1}{2}}{Y}_{\scalebox{0.5}{$\mathrm{s}$}}x = 0.$$
	However, this implies that 
	$$L_{\varepsilon}^{\frac{1}{2}}{Y}_{\scalebox{0.5}{$\mathrm{s}$}}x = 	
	L_{\varepsilon}^{\frac{1}{2}}{Q}_{\scalebox{0.5}{$\mathrm{s}$}}({Q}_{\scalebox{0.5}{$\mathrm{s}$}}^{\top}L_{\varepsilon}{Q}_{\scalebox{0.5}{$\mathrm{s}$}})^{-1}
	Q_{\scalebox{0.5}{$\mathrm{s}$}}^{\top}L_{\varepsilon}{Y}_{\scalebox{0.5}{$\mathrm{s}$}}x$$
	and multiplying this by $L_{\varepsilon}^{-\frac{1}{2}}$ would yield ${Y}_{\scalebox{0.5}{$\mathrm{s}$}}x = Q_{\scalebox{0.5}{$\mathrm{s}$}}y$, 
	with $y = ({Q}_{\scalebox{0.5}{$\mathrm{s}$}}^{\top}L_{\varepsilon}{Q}_{\scalebox{0.5}{$\mathrm{s}$}})^{-1}Q_{\mathrm{s}}^{\top}L_{\varepsilon}{Y}_{\scalebox{0.5}{$\mathrm{s}$}}x$. 
	Due to 
	Proposition~\ref{prop:QsandYs} this, however, is only possible if ${Y}_{\scalebox{0.5}{$\mathrm{s}$}}x=0$ and, as ${Y}_{\scalebox{0.5}{$\mathrm{s}$}}$ 
	has full column rank (see Remark~\ref{rem:Ysfullcol}),
	$x=0$. Therefore
	$C$ has full rank and is positive definite.
	
	According to Definition~\ref{def:cap-like}, we need to show that $\dxdy{}{t}\Phi$ can be written, with at most one differentiation,
	as a function depending only on $\dxdy{}{t}\icap,$ $\Phi$, $\vcap$, $\icap$ and $t$ (see~\eqref{eq:cap.xf}). 
	For that we invert $C$ in \eqref{eq:eqsicapf} to obtain 
	\begin{equation}\label{eq:eqsdtv}
		\dxdy{}{t}\vcap = \gcap(\Phi, \icap,\vcap)\;.
	\end{equation}
	This can now be inserted into \eqref{eq:eqsdtphi} to obtain a function
	\begin{equation}
	\dxdy{}{t}\Phi = \xfcap (\Phi, \icap, \vcap)\;,
	\end{equation}
	without having required any differentiation of the original system. 
	
	Due to \eqref{eq:eqsdtv}, we have already shown that we obtain a capacitance-like element. Furthermore, as $\partial_\icap \gcap(\Phi, \icap,\vcap)=C$, is
	positive definite, using Lemma~\ref{lem:posdef.monotone} and Definition~\ref{def:stronmonomxn}, the system is shown to be strongly capacitance-like. \qed

\end{proof}

\subsection{Resistance-like element}
The last refined model we study is the  eddy current equation for the simulation of magnets with superconducting coils. For that we consider
a magnetoquasistatic approximation of Maxwell's equations \cite{Jackson_1998aa} in terms of the $\vec{A}^*$ formulation \cite{Emson_1988aa}. 
Here, the gauging freedom of the magnetoquasistatic
setting allows to choose a special magnetic vector potential $\vec{A}$, such that the electric scalar potential $\varphi$ vanishes from the PDE. The governing equation
reads
\begin{align*}
	\nabla\times\nu\tau_{\scalebox{0.5}{$\mathrm{eq}$}}\nabla\times\dxdy{}{t}\Afield - \nabla\times\nu\nabla\times \Afield = \Jfield_{\scalebox{0.5}{$\mathrm{s}$}}\;.  
\end{align*}
The non-standard expression $\nabla\times\nu\tau_{\scalebox{0.5}{$\mathrm{eq}$}}\nabla\times\dxdy{}{t}\Afield$ is an homogenization 
model accounting for the cable magnetization, that represents
the eddy current effects of the
superconducting coils \cite{De-Gersem_2004ae}. It contains the cable time constant $\tau_{\scalebox{0.5}{$\mathrm{eq}$}}$,
 which depends on certain properties of the cable \cite{Verweij_1995aa}. 
This formulation is coupled to a circuit in order to simulate the superconducting magnet's protection system of the
LHC at CERN \cite{Bortot_2018ab,Cortes-Garcia_2017ab}. For the boundary value problem we also set the boundary conditions
\begin{equation}
	\vec{n}\times\Afield = 0, \quad \text{on }\Gamma_{\scalebox{0.5}{$\mathrm{dir}$,0}} \qquad \text{and} \qquad
	 \vec{n}\times(\nu\nabla\times\Afield) = 0, \quad \text{on }\Gamma_{\scalebox{0.5}{$\mathrm{neu}$,0}}\;,
\end{equation}
 where $\vec{n}$ is again the outer normal vector to the boundary $\Gamma$. Please note that here, no
 boundary conditions where set on $\Gamma_{\scalebox{0.5}{$\mathrm{s}$}}$, as for this example
 $\Gamma_{\scalebox{0.5}{$\mathrm{s}$}} = \emptyset$.

In this case, as $\Gamma_{\scalebox{0.5}{$\mathrm{s}$}} = \emptyset$, the circuit coupling is not performed through the boundary but by 
a characteristic function (winding density function) \cite{Schops_2013aa}, that discributes the zero dimensional current $\ires$ on the
two or three dimensional domain of the PDE. For the excitation of the coil's 
cross-section we define a $\boldsymbol{\chi}_{\scalebox{0.5}{$\mathrm{s}$}}:\Omega\rightarrow\setR^3$, such that
\begin{equation*}
	\vec{J}_{\scalebox{0.5}{$\mathrm{s}$}} = \boldsymbol{\chi}_{\scalebox{0.5}{$\mathrm{s}$}} \ires\;.
\end{equation*}
This also allows to extract the voltage across the coil as
\begin{equation*}
	\vres = -\int_{\Omega}\boldsymbol{\chi}_{\scalebox{0.5}{$\mathrm{s}$}}\cdot\vec{E}\;\mathrm{d}V\;.
\end{equation*}

\begin{assumption}\label{ass:tausource}
	As the magnet is excited through the superconducting coils, we assume that the domain, where the source current density is nonzero
	also corresponds to the domain, where the cable time constant is positive, that is
	$$\sup\tau_{\scalebox{0.5}{$\mathrm{eq}$}} = \sup\boldsymbol{\chi}_{\scalebox{0.5}{$\mathrm{s}$}} = \Omega_{\scalebox{0.5}{$\mathrm{s}$}}\;.$$
\end{assumption}

After spatial discretisation of the eddy current PDE with coupling equation, we obtain the DAE
\begin{subequations}\label{eq:discretemqs}
	\begin{align}
	\curlfit^{\top} M_{\nu,\tau_{\mathrm{eq}}}\curlfit\dxdy{}{t} a + \curlfit^{\top}\Mnu\curlfit a &= X\ires\label{eq:discretemqs1}\\
	X^{\top}\dxdy{}{t}a&=\vres\;,
	\end{align}
\end{subequations}
where $X$ is a vector, containing the discretisation of the winding density function. We define the orthogonal projector $Q_{\tau}$ onto $\ker \curlfit^{\top} M_{\nu,\tau_{\mathrm{eq}}}\curlfit$
and its complementary $P_{\tau} = I-Q_{\tau}$.

\begin{assumption}\label{ass:Cdiscrete}
	We assume that 
	\begin{itemize}
		\item the curl matrix $\curlfit$ and the discrete magnetic vector potential $a$ are gauged and contain homogeneous Dirichlet boundary conditions,
		such that $\curlfit$ has full column rank.
		\item there is no excitation outside of the coils, i.e., $Q_{\tau}^{\top}X=0$.
	\end{itemize}
\end{assumption}
The first part of the assumption is necessary, such that the DAE system \eqref{eq:discretemqs} is uniquely solvable. This is possible by for example using a 
tree-cotree gauge \cite{Munteanu_2002aa}, where the degrees of freedom of $a$ belonging to a gradient field are eliminated. The second part of the assumption
is motivated by the fact that the source current density has to be divergence-free, together with Assumption~\ref{ass:tausource}.

\begin{proposition}
	Provided Assumptions~\ref{ass:materials},~\ref{ass:tausource}-\ref{ass:Cdiscrete} are fulfilled, then the semidiscrete homogenized eddy current system of equations with 
	circuit coupling equation \eqref{eq:discretemqs} is a strongly resistance-like element.
\end{proposition}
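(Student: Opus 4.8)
The plan is to take the internal variable $\xres := a$ and to read the required form \eqref{eq:res.xf}--\eqref{eq:res.g} off \eqref{eq:discretemqs}. I abbreviate $K_\tau := \curlfit^\top M_{\nu,\tau_{\mathrm{eq}}}\curlfit$ and $K_\nu := \curlfit^\top\Mnu\curlfit$. Since $M_{\nu,\tau_{\mathrm{eq}}}$ is symmetric positive semidefinite (by Assumptions~\ref{ass:materials} and~\ref{ass:tausource} the tensor $\nu\tau_{\scalebox{0.5}{$\mathrm{eq}$}}$ is positive definite on $\Omega_{\scalebox{0.5}{$\mathrm{s}$}}$ and zero elsewhere), while $\Mnu$ is positive definite and $\curlfit$ has full column rank (Assumption~\ref{ass:Cdiscrete}), the matrix $K_\tau$ is symmetric positive semidefinite with $\ker K_\tau = \im Q_\tau$ and $K_\nu$ is symmetric positive definite. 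In particular $K_\tau$ is invertible on $\im P_\tau = \im K_\tau$; I write $K_\tau^{+}$ for the associated inverse, which is symmetric positive definite on $\im P_\tau$.

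First I would extract the hidden constraint. Multiplying \eqref{eq:discretemqs1} by $Q_\tau^\top$ and using $Q_\tau^\top K_\tau = 0$ together with $Q_\tau^\top X = 0$ (Assumption~\ref{ass:Cdiscrete}) yields $Q_\tau^\top K_\nu a = 0$. This algebraic relation forces the right-hand side $X\ires - K_\nu a$ of \eqref{eq:discretemqs1} into $\im K_\tau = \im P_\tau$, so \eqref{eq:discretemqs1} determines the $P_\tau$-part $P_\tau\dxdy{}{t}a = K_\tau^{+}(X\ires - K_\nu a)$. Differentiating the constraint once gives $Q_\tau^\top K_\nu\dxdy{}{t}a = 0$, and since $Q_\tau^\top K_\nu Q_\tau$ is positive definite on $\im Q_\tau$ it fixes $Q_\tau\dxdy{}{t}a$ as a linear function of $P_\tau\dxdy{}{t}a$. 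Adding the two pieces produces $\dxdy{}{t}a = \xfres(\xres,\ires,\vres,t)$, i.e. \eqref{eq:res.xf}, with no derivatives on the right-hand side; continuity is immediate because everything is affine with constant matrices.

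Next I would produce \eqref{eq:res.g}. Using $X^\top Q_\tau = 0$, the coupling equation $X^\top\dxdy{}{t}a = \vres$ becomes $\vres = X^\top P_\tau\dxdy{}{t}a = X^\top K_\tau^{+}(X\ires - K_\nu a)$, hence $\vres = R_{\mathrm{eff}}\,\ires - X^\top K_\tau^{+}K_\nu a$ with $R_{\mathrm{eff}} := X^\top K_\tau^{+}X$. Differentiating \eqref{eq:discretemqs} once (both lines) and eliminating the second derivative through the $P_\tau$-part of the differentiated first line, $P_\tau\dxdy{^2}{t^2}a = K_\tau^{+}(X\dxdy{}{t}\ires - K_\nu\dxdy{}{t}a)$, I obtain $\dxdy{}{t}\vres = R_{\mathrm{eff}}\dxdy{}{t}\ires - X^\top K_\tau^{+}K_\nu\dxdy{}{t}a$. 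Solving for $\dxdy{}{t}\ires$ and inserting the expression for $\dxdy{}{t}a$ from \eqref{eq:res.xf} gives $\dxdy{}{t}\ires = \gres(\dxdy{}{t}\vres,\xres,\ires,\vres,t)$ after a single differentiation, with $\partial_{\vpres}\gres = R_{\mathrm{eff}}^{-1}$.

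It then remains to show that $R_{\mathrm{eff}} = X^\top K_\tau^{+}X$ is positive definite, which I expect to be the main obstacle and the only place where the structural assumptions are genuinely used. Because $Q_\tau^\top X = 0$ forces $\im X \subseteq \im P_\tau$, and $K_\tau^{+}$ is symmetric positive definite on $\im P_\tau$, one gets $X^\top K_\tau^{+}X \succ 0$ (reading $X$ as a full-column-rank matrix in the multi-port case, or as a nonzero vector in the scalar case); here the assumption that the excitation lives only where the cable time constant is positive, i.e. $\im X \subseteq \im P_\tau$, is precisely what prevents the effective resistance from degenerating on $\ker K_\tau$. Consequently $R_{\mathrm{eff}}^{-1}$ is positive definite, and by Lemma~\ref{lem:posdef.monotone} applied within Definition~\ref{def:stronmonomxn} the function $\gres$ is continuous and strongly monotone with respect to $\vpres$. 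Together with \eqref{eq:res.xf} this shows that \eqref{eq:discretemqs} is a strongly resistance-like element.
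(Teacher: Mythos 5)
Your proof is correct and follows essentially the same route as the paper: splitting \eqref{eq:discretemqs1} with the orthogonal projectors $P_\tau$, $Q_\tau$, extracting the hidden constraint, recovering $\dxdy{}{t}a$ without differentiation on the $P_\tau$-part and with one differentiation on the $Q_\tau$-part, and reducing strong monotonicity of $\gres$ to positive definiteness of $X^\top K_\tau^{+}X$ via Lemma~\ref{lem:posdef.monotone}. The only cosmetic difference is that you work with the pseudo-inverse $K_\tau^{+}$ on $\im P_\tau$ where the paper uses the regularized inverse $(\curlfit^\top M_{\nu,\tau_{\mathrm{eq}}}\curlfit + Q_\tau^\top Q_\tau)^{-1}$; these coincide on $\im P_\tau$, so the effective resistance $R_{\mathrm{eff}}$ you obtain equals the paper's $G^{-1}$.
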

\begin{proof}
	We start by multiplying equation \eqref{eq:discretemqs1} by $Q_{\tau}^{\top}$ and $P_{\tau}^{\top}$ and obtain
	\begin{subequations}
		\begin{align}
		\curlfit^{\top} M_{\nu,\tau_{\mathrm{eq}}}\curlfit\dxdy{}{t} a + P_{\tau}^{\top}\curlfit^{\top}\Mnu\curlfit a &= P_{\tau}^{\top}X\ires\label{eq:Pmqs}\\
		Q_{\tau}^{\top}\curlfit^{\top}\Mnu\curlfit a &= Q_{\tau}^{\top}X\ires\label{eq:Qmqs}
		\end{align}
	\end{subequations}
	From \eqref{eq:Pmqs} we obtain without the need of any differentiation
	\begin{equation}\label{eq:Pdta}
		P_{\tau}\dxdy{}{t}a = (\curlfit^{\top} M_{\nu,\tau_{\mathrm{eq}}}\curlfit + Q_{\tau}^{\top}Q_{\tau})^{-1}(P_{\tau}^{\top}X\ires - P_{\tau}^{\top}\curlfit^{\top}\Mnu\curlfit a)\;.
	\end{equation}
	Differentiating \eqref{eq:Qmqs} once and using Assumption~\ref{ass:Cdiscrete} we have
	 \begin{align}\label{eq:Qdta}
	 	Q_{\tau}\dxdy{}{t}a = -(Q_{\tau}^{\top} \curlfit^{\top}\Mnu\curlfit Q_{\tau} + P_{\tau}^{\top}P_{\tau})^{-1}Q_{\tau}^{\top}\curlfit^{\top}\Mnu\curlfit  P_{\tau}\dxdy{}{t}a
	 \end{align}
	 Inserting \eqref{eq:Pdta} into \eqref{eq:Qdta} we obtain an ODE with the structure
	 \begin{equation*}
	 	\dxdy{}{t}\xres = \xfres(\xres, \ires)\;,
	 \end{equation*}
	 where $\xres = P_{\tau}a + Q_{\tau}a$.
	 Now we use Assumption~\ref{ass:Cdiscrete} and insert \eqref{eq:Pdta} into the circuit coupling equation to obtain
	 \begin{align*}
	 	\vres & =X^{\top}\dxdy{}{t}(P_{\tau}a + Q_{\tau}a) = X^{\top}\dxdy{}{t}P_{\tau}a\\
	 	     &= X^{\top}P_{\tau}(\curlfit^{\top} M_{\nu,\tau_{\mathrm{eq}}}\curlfit + Q_{\tau}^{\top}Q_{\tau})^{-1}(P_{\tau}^{\top}X\ires - P_{\tau}^{\top}\curlfit^{\top}\Mnu\curlfit a)\;.
	 \end{align*}
	 To obtain this expression no differentiation was needed, thus if we differentiate it once, 
	 according to  Definition~\ref{def:res-like} and using  Lemma~\ref{lem:posdef.monotone} and Definition~\ref{def:stronmonomxn}, 
	 we now only need to show that $G = \partial_{\vres'}\gres(\vres',\xres,\ires,\vres,t)$, with 
	 $$G^{-1} = X^{\top}P_{\tau}(\curlfit^{\top} M_{\nu,\tau_{\mathrm{eq}}}\curlfit + Q_{\tau}^{\top}Q_{\tau})^{-1}P_{\tau}^{\top}X\;,$$ 
	 is positive definite to obtain that \eqref{eq:discretemqs}
	 is a strongly resistance-like element. This follows immediately by the fact that $M_{\nu,\tau_{\mathrm{eq}}}$ 
	 is positive semidefinite (Assumptions~\ref{ass:materials}~and~\ref{ass:tausource}) and $X$ has full column rank, as it is only a vector.\qed
\end{proof}

\section{Conclusions}
This paper has demonstrated that even very complicated refined models with internal degrees of freedom can be characterized by generalizations of the basic circuit elements, i.e., resistance, inductance and capacitance. This knowledge significantly simplifies the structural analysis of future networks consisting of refined models. Structural properties of the network, e.g. the differential algebraic index, can easily be deduced if the element is identified in terms of the proposed generalized elements.

\printindex

\bibliographystyle{spmpsci}

\end{document}